\documentclass[preview,hidelinks,onefignum,onetabnum]{siamart251216}

\usepackage{amsgen,amssymb,amsfonts,amsbsy,amsmath,amscd}
\usepackage{comment} 
\usepackage{tikz}
\usepackage{pgfplots}
\usetikzlibrary{external}
\usepgfplotslibrary{groupplots}
\usepackage{caption}
\usepackage{lipsum}
\usepackage{graphicx}
\usepackage{epstopdf}
\usepackage{algorithmic}
\ifpdf
  \DeclareGraphicsExtensions{.eps,.pdf,.png,.jpg}
\else
  \DeclareGraphicsExtensions{.eps}
\fi

\renewcommand{\SS}{\mathcal{S}}
\newcommand{\KK}{\mathcal{K}}

\newsiamremark{remark}{Remark}
\newsiamremark{hypothesis}{Hypothesis}
\crefname{hypothesis}{Hypothesis}{Hypotheses}
\newsiamthm{claim}{Claim}
\newsiamremark{fact}{Fact}
\crefname{fact}{Fact}{Facts}

\headers{Spectral theory of nonlocal plasmonics}{H. Lee, M. Ruiz, and S. Yu}

\title{Spectral theory of plasmonic resonances in the nonlocal regime\thanks{
\funding{	H.L. was supported by the \textit{National Research Foundation of Korea} (NRF) grant No. NRF-2018R1D1A1B07042678 and grant No. NRF-2022R1A4A5033271. M.R. was supported by \textit{The Royal Society} grant No. IES-R1-221233. S.Y. was supported by the \textit{National Research Foundation of Korea} (NRF) grant No. RS-2025-24523302 and from \textit{Korea University} grant No. K1923451}}
}

\author{Hyundae Lee\thanks{Department of Mathematics, Inha University, Incheon, Korea 
  (\email{hdlee@inha.ac.kr}).}
\and Matias Ruiz\thanks{{School of Computing and Mathematical Sciences, University of Leicester, Leicester, UK
  (\email{mr447@leicester.ac.uk}).}}
\and Sanghyeon Yu\thanks{Department of Mathematics, Korea University, Seoul, Korea 
  (\email{sanghyeon\_yu@korea.ac.kr}).}
}

\begin{document}

\maketitle

\begin{abstract}
We present a rigorous spectral analysis of plasmonic resonances in the nonlocal regime of spatially dispersive media. We adopt the quasi-static approximation of the hydrodynamic Drude model, which provides an analytically tractable setting to account for nonlocal effects. By reformulating the governing equations as a boundary integral system, we obtain an analytic Fredholm operator pencil that characterizes resonant behaviour. This framework enables the study of nonlocal plasmonic eigenvalues in general bounded Lipschitz domains, together with a corresponding resonant expansion of the scattered field. Our main result reveals a fundamental change in spectral structure: in contrast to the local theory—which exhibits infinitely many surface plasmon modes and field singularities in domains with corners—the nonlocal model admits a discrete real spectrum with a single accumulation point at a strictly positive value. Consequently, only finitely many surface plasmon modes exist, and the singular behaviour associated with sharp geometries is regularized. As such, our results show that nonlocality does not merely shift plasmonic resonances but completely reshapes the spectral landscape in a way that provides a mathematically transparent explanation for several nonlocal phenomena previously observed in numerical studies.
\end{abstract}

\begin{keywords}
	plasmonic resonances, nonlocal plasmonics, hydrodynamic Drude model, spectral theory, layer potentials.
\end{keywords}

\begin{MSCcodes}
35P05, 45B05, 31B10, 78A25
\end{MSCcodes}

	\section{Introduction}
	Plasmonic resonances in metallic nanoparticles correspond to coherent oscillations of the particle’s electron gas that, at specific excitation frequencies, generate resonant modes capable of strongly enhancing the near-field electric field \cite{Maier:07}. This phenomenon, first described in classical electrodynamics, has attracted renewed interest due to applications in sensing, imaging, nonlinear optics, and metamaterials.
	
	A common approach to their study is based on the quasi-static approximation of Maxwell’s equations, in which the electric field is represented by an electrostatic potential satisfying Gauss’s law, together with a local, piecewise-constant permittivity model \cite{Bohren:Book}. Within this setting, the problem admits a boundary integral formulation involving the Neumann-Poincaré operator, whose spectral properties provide a natural analytical framework for the characterization of plasmonic resonances \cite{Ando:16, Ammari:17, Ammari_book:18}.
	
	The quasi-static approximation is valid in the subwavelength regime, where the characteristic size of the particle is significantly smaller than the wavelength of the incident field. However, when the particle size decreases to only a few nanometres, the quantum-mechanical nature of the electron gas becomes significant, leading to the breakdown of the classical local constitutive relations \cite{Boroviks2022}. As such, the continued miniaturization of plasmonic and photonic architectures has made the inclusion of spatial-dispersion corrections a necessity, positioning nonlocality as one of the major frontiers in the study of optical materials and metamaterials \cite{NonlocalRoadmap25}. Moreover, even for moderately sized particles where the quasi-static approximation remains valid, the spectral properties of the local plasmonic spectral problem present features that are at odds with microscopic physics. Indeed, in smooth domains, the local theory predicts infinitely many boundary-localised eigenmodes \cite{Schnitzer:2019}, while in non-smooth geometries it gives rise to highly oscillatory corner modes associated with continuous spectra \cite{BonnetBenDhia:2013, Bonnetier:17, perfekt:2021}. These mathematical features correspond to surface potentials with arbitrarily rapid spatial oscillations—behaviour incompatible with microscopic considerations, where electron-electron repulsion and finite screening lengths prevent such extreme localisation.
	
	A convenient way to incorporate nonlocal effects, while
	avoiding the usual hurdles of a full quantum many-body Hamiltonian, is through semiclassical continuum models. Among these, the nonlocal hydrodynamic Drude model, in which the conduction electrons are treated as a charged compressible fluid, has become a primary framework for describing spatially dispersive effects in metallic nanostructures \cite{ciraci:2013, Raza:15}. Originally proposed in the context of electron-gas hydrodynamics \cite{Boardman:1982}, the model has since been extensively employed in nanoplasmonics to capture qualitative features of nonlocal response, including resonance blue shifts \cite{garcia:2008, yang:2020}, field screening at sub-nanometric scales \cite{wiener:2012, yang:2020}, and the saturation of near-field enhancement \cite{Fernandez:12, Schnitzer:16b, Schnitzer:16}. Recent works have also investigated analytical and numerical aspects of the model, including well-posedness \cite{Mystilidis:24}, multiscale analysis \cite{Schnitzer:16b}, numerical computation of resonances \cite{Binkowski:2019}, as well as solutions based on finite and boundary elements \cite{Hiremath:12, SCHMITT:2016, Yan:13}.
	
	Many of these studies highlight the importance of understanding the problem from a modal or spectral perspective, since this provides a natural language for interpreting observable quantities such as optical cross-sections and near-field intensities. While in the classical quasi-static theory this modal structure arises naturally from the spectral analysis of the Neumann–Poincaré operator, a comparable mathematically-rigorous spectral theory in the nonlocal regime has not yet been fully established. This work addresses this gap by establishing a spectral framework for nonlocal plasmonics. In particular, we introduce a precise operator-theoretic definition of nonlocal plasmonic eigenvalues and eigenmodes, and study their qualitative properties in general Lipschitz domains.

	We adopt the linearised quasi-static reduction of the hydrodynamic model, which consists of a Laplace equation for the electrostatic potential, coupled with a Helmholtz-type equation for the longitudinal charge-density fluctuation, and subject to a set of transmission conditions at the material interface. In direct analogy with the local quasi-static model, we use layer potential techniques to recast this system as a single permittivity-dependent boundary-integral equation that fully encapsulates the physical coupling of the model. Within this formulation, the nonlocal plasmonic spectrum is characterised as a set of poles of the associated operator-valued map, which are analysed using analytic Fredholm theory.
	
	A key outcome of our analysis is that the associated operator remains Fredholm for any bounded Lipschitz domain. Consequently, the corner–induced spectral pathologies of the local quasi-static model—arising from the loss of compactness of the Neumann–Poincaré operator—are suppressed, and well-defined plasmonic modes persist, even in the presence of geometric singularities. We further show that the nonlocal plasmonic eigenvalue problem admits only finitely many surface modes. This, again, stands in sharp contrast with the local quasi-static theory, which predicts infinite sequences of surface modes accumulating at the surface-plasmon limit \cite{Schnitzer:2019}; to the best of our knowledge, such a finiteness result has not been discussed previously, even at the numerical level. In addition, the analytic Fredholm structure of the operator family yields a pole expansion of the resolvent, providing a rigorous modal representation of solutions to the driven nonlocal system. 
	
	Taken together, these results show that nonlocality fundamentally alters the structure of the plasmonic spectral problem, introducing properties that, through the associated modal expansion, lead to clear and observable physical consequences. In particular, the direct correspondence between resonant electromagnetic fields and nonlocal plasmonic modes established by the modal expansion, together with the finiteness of surface modes in any Lipschitz domain, provides a natural framework for interpreting phenomena such as the regularisation of corner singularities \cite{wiener:2012, ciraci:2013} and the emergence of multi-peak excitation patterns under near-field excitation \cite{Christensen:2014}. 
	
	While this work focuses on the hydrodynamic model, our approach suggests a general framework which may characterise how higher-order corrections act as a spectral regularisation mechanism, restoring Fredholmness and reorganising the spectrum. Indeed, the hydrodynamic model considered here may be viewed as a fourth-order singular perturbation of an underlying second-order elliptic problem. Structures of this type arise in a range of physical settings, including generalisations of the hydrodynamic Drude model \cite{Mortensen:2014}, quasi-incompressible Cahn--Hilliard fluids \cite{Lowengrub:1998}, and gradient-type theories in nonlocal elasticity \cite{Eringen:1983}; in all of these settings, the corresponding unperturbed models fail to be Fredholm in the presence of material and geometric singularities \cite{mitrea:2002}. In such contexts, the present approach is particularly well suited: the classical jump relations of layer potentials provide a flexible mechanism to incorporate the additional boundary conditions induced by higher-order terms, leading to an operator-theoretic framework amenable to analytic Fredholm theory. In this way, the present work opens a door to the rigorous analysis of a broader class of higher-order singular perturbations of second-order elliptic systems.
	
	This paper is organised as follows. In Section~\ref{sec: nonlocal model} we introduce the linearised quasi-static hydrodynamic Drude model that forms the basis of our study. In Section~\ref{sec: layer potential formulation}, we first recall the fundamentals of layer potentials, which serve as the main analytical tool throughout the paper, and then reformulate the nonlocal problem as a system of boundary-integral equations. This formulation is then used in Section~\ref{sec: nonlocal plasmonic eigenvalue} to define nonlocal plasmonic eigenvalues and eigenmodes, and to establish our main result on the qualitative structure of the spectrum in general Lipschitz domains. We also discuss the implications of this result and show how the framework recovers previously derived eigenvalue perturbation formulas. Finally, in Section~\ref{sec: modal expansion} we construct a modal expansion and use it to analyse different phenomena previously observed in numerical studies. The paper concludes in Section~\ref{sec: conclusion} with final remarks and directions for future work.
	
	Throughout this paper, $H^s(X)$ denotes the usual Sobolev space of order $s$ in a space $X$, and $\langle\cdot,\cdot\rangle_{L^2(X)}$ denotes the standard $L^2(X)$ inner-product.
	
	\section{The nonlocal quasi-static hydrodynamic Drude model}\label{sec: nonlocal model}
	Consider a bounded Lipschitz domain \( D \subset \mathbb{R}^3 \) representing a metallic nanoparticle embedded in vacuum. To account for spatial dispersion in the light–matter interaction, we adopt the nonlocal hydrodynamic Drude model, which describes the conduction electrons in the metal as a charged compressible fluid governed by hydrodynamic equations.
	
	We work in the quasi-static approximation, which neglects radiative losses and is appropriate when the particle size is much smaller than the wavelength of the incident radiation. In this regime, the electric field derives from a microscopic electrostatic potential \(\phi\). Therefore, Gauss’ law inside the metal becomes
	\[
	-\Delta \phi = \frac{\tilde{\rho}}{\varepsilon_0}\qquad \text{in } D,
	\]
	where $\tilde{\rho}$ is the induced charge density, which after linearisation of the hydrodynamic equations \cite{Schnitzer:16b, Raza:15}, satisfies the relation
		\[
	\frac{\beta^{2}}{\omega_p^2}\Delta \tilde{\rho}
	-
	\frac{\varepsilon}{\varepsilon-1}\tilde{\rho}
	= 0
	\qquad \text{in } D.
	\]
	The above is derived assuming the Drude permittivity model \cite{Maier:07}
	\begin{equation}\label{eq:Drude}
		\varepsilon(\omega)
		= 1 - \frac{\omega_p^{2}}{\omega^{2} + i\gamma\omega},
	\end{equation}
where $\omega_p$ denotes the plasma frequency and $\gamma$ the damping rate.
	
In the exterior region \(\mathbb{R}^3\setminus\overline{D}\) the medium is vacuum, so there are no conduction electrons and Gauss’ law gives
\[
\Delta \phi = \tilde{f}
\qquad \text{in } \mathbb{R}^3\setminus\overline{D},
\]
where \(\tilde{f}\) has compact support outside $D$ and represents a near-field excitation source. We also impose the decay condition
\[
|\tilde{v}(x)-\tilde{g}(x)|\to0
\qquad \text{as } |x|\to\infty,
\]
where \(\tilde{g}\) is a harmonic function representing a far-field excitation source. 
	
Let \(\nu\) denote the outward unit normal to \(\partial D\). Since \(\phi\) is a microscopic electric potential both inside and outside the particle, the standard transmission conditions for the electric field on \(\partial D\) become
\[
\left.\phi\right|_-=\left.\phi\right|_+,
\qquad
\left.\frac{\partial \phi}{\partial \nu}\right|_-
=
\left.\frac{\partial \phi}{\partial \nu}\right|_+
=
-h\frac{\partial \tilde{\rho}}{\partial \nu}
\qquad \text{on } \partial D.
\]
The last identity follows from the hard-wall condition of the hydrodynamic model, which prevents electron spill-out by requiring that the normal component of the induced current vanishes on the boundary. Here and throughout, the subscripts $+$ and $-$ denote limits taken from the exterior and interior of $D$, respectively.

	\paragraph{Final system}
	Following \cite{Schnitzer:16b}, we introduce a dimensionless formulation of the problem. Let \(L\) denote a characteristic length of the particle and define
	\[
	h := \frac{\beta}{L\omega_p},
	\]
	which measures the strength of nonlocal effects and represents a dimensionless screening length. Define the rescaled variables
	\[
	\rho := \frac{\beta}{\omega_p\varepsilon_0}\tilde{\rho},
	\qquad
	u := \frac{\phi|_{D}}{L},
	\qquad
	v := \frac{\phi|_{\mathbb{R}^3 \setminus \overline{D}}}{L}.
	\]
	In these variables, the problem corresponds to finding
	\[
	\rho \in H^{1}(D),\quad
	u \in H^{1}(D),\quad
	v \in H^{1}_{\mathrm{loc}}(\mathbb{R}^3 \setminus \overline{D})
	\]
	such that the following system holds,
	\begin{equation}\label{eq:nonlocal scattering problem}
		\left|
		\begin{array}{ll}
			
			h^{2}\Delta \rho
			= \dfrac{\varepsilon}{\varepsilon - 1}\rho & \text{in } D,\\[4pt]
			
			h\Delta u = -\rho & \text{in } D,\\[4pt]
			
			\Delta v = f & \text{in } \mathbb{R}^3 \setminus \overline{D},\\[6pt]
			
			u = v & \text{on } \partial D,\\[4pt]
			
			\dfrac{\partial u}{\partial\nu}
			=
			\dfrac{\partial v}{\partial\nu}
			=
			-h\dfrac{\partial \rho}{\partial\nu}
			& \text{on } \partial D,\\[6pt]
			
			|v(x) - g(x)| \to 0 & \text{as } |x| \to \infty .
			
		\end{array}
		\right.
	\end{equation}
	
	The well-posedness of this system in the above functional framework will be analysed in the following sections.
	
	\section{Boundary integral formulation of the nonlocal model}\label{sec: layer potential formulation}
	Before reformulating the nonlocal excitation problem \eqref{eq:nonlocal scattering problem} as a system of boundary-integral equations, we recall elements of layer potential theory to fix notation and prove an operator decomposition result that will underpin the subsequent spectral analysis.
	\subsection{Layer potentials and operators decomposition}\label{sec: layer potentials}
	Let $D\subset\mathbb{R}^3$ be a bounded open set with a Lipschitz boundary. For $k\in\mathbb{C}$, denote by $\SS^k$ the single layer potential defined by
	\[
	\SS^k[\psi](x)
	:=
	\int_{\partial D}\Gamma^k(x-y)\psi(y)\,d\sigma(y),
	\qquad x\in\mathbb{R}^3,
	\]
	where $\Gamma^k$ denotes the outgoing fundamental solution of the Helmholtz operator $\Delta+k^2$, given by
	\begin{equation}\label{eq: Green function}
		\Gamma^k(x)=-\frac{1}{4\pi |x|}e^{ik|x|}.
	\end{equation}
	
	It is well-known (see, for instance, \cite{Ammari_book:09}) that 
	\[
	\SS^k:H^{-\frac{1}{2}}(\partial D)\to H^{\frac{1}{2}}(\partial D)\quad(\text{or }H^1_{\mathrm{loc}}(\mathbb R^3))
	\]
	is bounded and that the associated normal derivatives satisfy the jump relation
	\begin{equation}\label{jump1}
		\left.\frac{\partial}{\partial\nu}\SS^k[\psi]\right|_{\pm}
		=
		\left(\pm\frac12 I+\KK^{*,k}\right)[\psi],
		\qquad x\in\partial D ,
	\end{equation}
	where $\KK^{*,k}$ denotes the adjoint double-layer operator defined by
	\[
	\KK^{*,k}[\psi](x)
	=
	\mathrm{p.v.}
	\int_{\partial D}
	\frac{\partial \Gamma^k(x-y)}{\partial\nu(x)}
	\psi(y)\,d\sigma(y),
	\qquad x\in\partial D.
	\]
	Here $\mathrm{p.v.}$ stands for \textit{principal value}, $\partial/\partial \nu$ denotes differentiation in the outward normal direction. The operator
	\[
	\KK^{*,k}:H^{-\frac{1}{2}}(\partial D)\to H^{-\frac{1}{2}}(\partial D)
	\]
	is bounded for any bounded Lipschitz domain $D$ \cite{Ammari_book:09}.
	
	We write
	\[
	\SS := \SS^0, \qquad \KK^* := \KK^{*,0},
	\]
	which correspond to the static single-layer and  Neumann–Poincaré operator, respectively.
	\medskip
	
	\noindent\textbf{Decomposition of the operators.}
	We now introduce the operators
	\begin{align}
		\SS_1^k[\psi](x)
		&:=
		\int_{\partial D}
		\Gamma_1^k(x-y)\psi(y)\,d\sigma(y),\label{eq: S_1}\\
		\KK_1^{*,k}[\psi](x)
		&:=
		\int_{\partial D}
		\frac{\partial \Gamma_1^k(x-y)}{\partial\nu(x)}
		\psi(y)\,d\sigma(y), \label{eq: K_1}
	\end{align}
	where $\Gamma_1^k$ is defined through the decomposition
	\begin{equation}\label{eq: decomposition Gamma_k}
		\Gamma^k(x)=\Gamma^0(x)-\frac{ik}{4\pi}+k^2\Gamma_1^k(x).
	\end{equation}

    We have the following result.
	\begin{lemma}\label{lem: S_D1 and K_D1}
		Let $D$ be a bounded Lipschitz domain in $\mathbb{R}^3$. The operators
		\[
		\SS_1^k:H^{-\frac12}(\partial D)\to H^{\frac12}(\partial D)
		\qquad
		\KK_1^{*,k}:H^{-\frac12}(\partial D)\to H^{-\frac12}(\partial D)
		\]
		are compact for all $k\in\mathbb{C}$. Moreover, the operator-valued maps
		$k\mapsto \SS_1^k$ and $k\mapsto \KK_1^{*,k}$ are analytic in $\mathbb{C}$.
	\end{lemma}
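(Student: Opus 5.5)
Throughout, $\Gamma_1^k$ denotes the kernel fixed by \eqref{eq: decomposition Gamma_k}. The plan is to show that $\Gamma_1^k$ is a kernel that is three orders smoother than $\Gamma^0$. The gain will be traded for compactness through the trace theorem on Lipschitz boundaries and the compactness of Sobolev embeddings on $\partial D$.

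\emph{Step 1 (structure of the kernel).} From \eqref{eq: Green function} and \eqref{eq: decomposition Gamma_k}, for $k\neq 0$,
\[
\Gamma_1^k(x)=-\frac{1}{4\pi|x|}\,\frac{e^{ik|x|}-1-ik|x|}{k^2}
=-\frac{1}{4\pi}\sum_{n\ge 2}\frac{i^n k^{n-2}}{n!}\,|x|^{n-1}.
\]
This extends to $k=0$ and is entire in $k$. Its leading term is $|x|/(8\pi)$, the biharmonic fundamental solution. Differentiating the decomposition in the sense of distributions gives
\[
\Delta\Gamma_1^k=\frac{\Delta\Gamma^k-\Delta\Gamma^0}{k^2}=\frac{(-k^2\Gamma^k+\delta)-\delta}{k^2}=-\Gamma^k ,
\]
and by continuity in $k$ this identity also holds at $k=0$. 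Hence convolution with $\Gamma_1^k$, localised to a fixed ball $B\supset\overline D$, gains four derivatives.

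\emph{Step 2 (mapping into a smoother space).} For $\psi\in H^{-1/2}(\partial D)$, the measure $\psi\,d\sigma$ lies in $H^{-1}_{\mathrm{comp}}(\mathbb R^3)$, by duality with the trace map $H^1(B)\to H^{1/2}(\partial D)$. Therefore $\SS_1^k[\psi]=\Gamma_1^k*(\psi\,d\sigma)$ belongs to $H^3(B)$, with norm bounded by $C\|\psi\|_{H^{-1/2}(\partial D)}$.

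On a Lipschitz domain the trace $H^s(B)\to H^{s-1/2}(\partial D)$ is bounded for $1/2<s<3/2$. Taking $s=3/2-\eta$ shows that $\SS_1^k$ is bounded into $H^{1-\eta}(\partial D)$. Applied to $\nabla\SS_1^k[\psi]\in H^2(B)$, the same trace shows $\nabla\SS_1^k[\psi]|_{\partial D}\in H^{1-\eta}(\partial D)$. Multiplying by $\nu\in L^\infty$ then gives $\KK_1^{*,k}[\psi]\in L^2(\partial D)$.

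There are no jump terms and no principal value here, because $\nabla\Gamma_1^k$ is bounded. The compact embeddings $H^{1-\eta}(\partial D)\hookrightarrow H^{1/2}(\partial D)$ and $L^2(\partial D)\hookrightarrow H^{-1/2}(\partial D)$, valid for $\eta<1/2$ on a compact Lipschitz surface, then yield compactness of both operators.

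\emph{Step 3 (analyticity).} Write
\[
\SS_1^k=-\frac1{4\pi}\sum_{n\ge2}\frac{i^nk^{n-2}}{n!}T_n ,
\]
where $T_n$ is the boundary operator with kernel $|x-y|^{n-1}$, and similarly for $\KK_1^{*,k}$ using the kernels $\partial_{\nu(x)}|x-y|^{n-1}$. I expect the main obstacle to be bounding $\|T_n\|$ from $H^{-1/2}(\partial D)$ to $H^{1/2}(\partial D)$. A crude bound of the form $\|T_n\|\le C\,n^3R^{n}$, with $R=\operatorname{diam}D$, would suffice, since then the series converges in operator norm for every $k$ and gives entire maps.

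To obtain this bound, I would rerun the argument of Step 2. The point is that $|x|^{n-1}$ restricted to $B-B$ has $H^3$-type norms (after repeated application of $\Delta|x|^{m}=m(m+1)|x|^{m-2}$) growing at most polynomially in $n$ times $R^n$. If this direct bound proves messy, I would use a fallback. Weak analyticity of $k\mapsto\langle \SS_1^k\psi,\varphi\rangle$ follows from analyticity of the kernel with locally uniform bounds and dominated convergence. Together with local boundedness of the operator norm, which follows from Step 2 with constants uniform for $k$ in compact sets, this upgrades to analyticity in operator norm.
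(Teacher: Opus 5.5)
Your argument is correct. For $\SS_1^k$ it is the paper's argument: the Laplacian identity for $\Gamma_1^k$ puts $\SS_1^k[\psi]$ in $H^3_{\mathrm{loc}}$, after which one takes a trace and uses a compact embedding on $\partial D$. Your normalisation $\Delta\Gamma_1^k=-\Gamma^k$ is the correct one; the paper's extra factor $k^2$ is harmless.

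The differences are in the details, and mostly in your favour.
\begin{itemize}
\item \emph{Traces.} The paper traces the $H^3$ function into $H^{5/2}(\partial D)$. That step is not available on a Lipschitz surface: the trace theorem $H^s(D)\to H^{s-1/2}(\partial D)$ only covers $1/2<s<3/2$, and $H^{5/2}(\partial D)$ is not even intrinsically defined there. Your choice $s=3/2-\eta$, landing in $H^{1-\eta}(\partial D)$, is the rigorous form of that step.
\item \emph{The operator $\KK_1^{*,k}$.} The paper argues pointwise by duality. Because $\Gamma_1^k\in H^2_{\mathrm{loc}}$, the function $y\mapsto\partial_{\nu(x)}\Gamma_1^k(x-y)$ has an $H^{1/2}(\partial D)$ trace bounded uniformly in $x$. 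This gives an $L^\infty(\partial D)$ bound on $\KK_1^{*,k}[\psi]$ with no further elliptic regularity. You instead trace $\nabla\SS_1^k[\psi]\in H^2(B)$. That is equally valid and handles both operators with one regularity statement.
\item \emph{Analyticity.} The paper only asserts that analytic dependence of the kernel suffices. Your norm-convergent series, or the fallback via weak analyticity plus locally uniform operator bounds, actually supplies the justification.
\end{itemize}

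If you take the series route, note that $|x|\notin H^3_{\mathrm{loc}}(\mathbb{R}^3)$, so the literal $H^3$ norm you mention is infinite for $n=2$. The cleanest bound on $T_n$ comes from the paper's duality trick instead. The $H^2$-norm of $|x|^{n-1}$ on a ball of radius $R$ containing $B-B$ is at most polynomial in $n$ times $R^n$. This bounds $T_n\psi$ in $W^{1,\infty}(B)$ by that quantity times $\|\psi\|_{H^{-1/2}(\partial D)}$. The restriction of a Lipschitz function to $\partial D$ lies in $H^1(\partial D)\hookrightarrow H^{1/2}(\partial D)$, which gives the operator bound you need.
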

	
	\begin{proof}
		We first prove the compactness of $\SS_1^k:H^{-\frac12}(\partial D)\to H^{\frac12}(\partial D)$. Consider this operator as acting from $H^{-\frac12}(\partial D)$ to $H^1_{\mathrm{loc}}(\mathbb R^3)$, which is bounded for any bounded Lipschitz domain $D$. Since
		\[
		(\Delta+k^2)\Gamma^k=-\delta,
		\qquad
		\Delta\Gamma^0=-\delta,
		\]
		it follows, in the sense of distributions in $\mathbb R^3$, that
		$
		\Delta\Gamma_1^k=-k^2\Gamma^k
		$, and consequently,
		$
		\Delta S_1^k[\psi]=-k^2 S^k[\psi].
		$
		Since $S_1^k[\psi]\in H^1_{\mathrm{loc}}(\mathbb R^3)$ and $S^k[\psi]\in
		H^1_{\mathrm{loc}}(\mathbb R^3)$, we have that
		$
		\Delta S_1^k[\psi]\in H^1_{\mathrm{loc}}(\mathbb R^3),
		$
		and therefore
		$
		S_1^k[\psi]\in H^3_{\mathrm{loc}}(\mathbb R^3)
		$, which in turn implies, after taking traces on $\partial D$ that
		$
		S_1^k[\psi]\in H^{5/2}(\partial D).
		$
		
		Hence, the operator
		$
		S_1^k:H^{-\frac12}(\partial D)\to H^{5/2}(\partial D)
		$
		is bounded, and since the embedding
		$
		H^{5/2}(\partial D)\hookrightarrow H^{1/2}(\partial D)
		$
		is compact on the compact Lipschitz surface $\partial D$, it follows that
		$
		S_1^k:H^{-\frac12}(\partial D)\to H^{1/2}(\partial D)
		$
		is compact.
		
		\medskip
		
		We next prove the compactness of $\KK^{*,k}:H^{-1/2}(\partial D)\to H^{-1/2}(\partial D)$. From
		\eqref{eq: decomposition Gamma_k} and a Taylor expansion of $\Gamma^0$
		near the origin we obtain
		\begin{equation}\label{eq: S_1^k decomposition}
			\Gamma_1^k(x)=c|x|+k|x|^2\upsilon(k|x|),
		\end{equation}
		where $c$ is a constant and $\upsilon$ is analytic. It follows that
		\[
		\kappa_x:=\frac{\partial\Gamma_1^k(x-\cdot)}{\partial\nu(x)}
		\in H^1(D).
		\]
		Let $\tau$ denote the trace operator on $\partial D$. Then
		$\tau(\kappa_x)\in H^{1/2}(\partial D)$, and therefore by Hölder's
		inequality
		\[
		\|\KK^{*,k}[\psi]\|_{L^2(\partial D)}
		\le
		\|\tau(\kappa_x)\|_{H^{1/2}(\partial D)}
		\|\psi\|_{H^{-1/2}(\partial D)} .
		\]
		Integrating over $\partial D$ yields
		\[
		\|\KK^{*,k}[\psi]\|_{L^2(\partial D)}
		\le C\|\psi\|_{H^{-1/2}(\partial D)},
		\]
		for some constant $C$ independent of $x$. 
		
		Hence
		$
		\KK^{*,k}:H^{-1/2}(\partial D)\to L^2(\partial D)
		$
		is bounded, and since the embedding
		$
		L^2(\partial D)\hookrightarrow H^{-1/2}(\partial D)
		$
		is compact on the Lipschitz surface $\partial D$, it follows that
		$
		\KK^{*,k}:H^{-1/2}(\partial D)\to H^{-1/2}(\partial D)
		$
		is compact.
		
		\medskip
		
		Finally, \eqref{eq: S_1^k decomposition} shows that the kernels
		\[
		\Gamma_1^k(x-y),
		\qquad
		\frac{\partial\Gamma_1^k(x-y)}{\partial\nu(x)}
		\]
		depend analytically on $k$. This directly implies that the
		operator-valued maps $k\mapsto \SS_1^k$ and
		$k\mapsto \KK_1^{*,k}$ are analytic.
	\end{proof}
	
	\subsection{Re-formulation of the nonlocal system}
	Let
	\begin{equation}\label{eq: definition of z^2}
		z:=\sqrt{\frac{\varepsilon}{1-\varepsilon}},
	\end{equation}
	and consider three boundary densities $\zeta,\,p,\,q\in H^{-\frac{1}{2}}(\partial D)$. We seek a solution of \eqref{eq:nonlocal scattering problem} in the form
	\begin{equation}\label{eq: layer potential representation}
		\rho= \SS^{\frac{z}{h}}[\zeta], \qquad 
		u= \frac{h}{z^2} \rho + \SS[p],\qquad 
		v = \SS[q] + v^{\text{ext}},
	\end{equation}
	where $\SS^{\frac{z}{h}}$ and $\SS$ denote single layer potentials defined in Section~\ref{sec: layer potentials}, and 
	\begin{equation}\label{eq: v^ext}
		v^{\mathrm{ext}}(x)
		:=
		\int_{\mathbb{R}^3}\Gamma^0(x,y)f(y)\,dy
		+
		g(x).
	\end{equation}
We obtain the following result.
	\begin{proposition} \label{prop1}
		For a given $\varepsilon\in\mathbb{C}$, $\varepsilon\neq0$, let $z\in\mathbb{C}$ satisfy \eqref{eq: definition of z^2}, and let $\psi\in H^{-\frac{1}{2}}(\partial D)$ solve
		\begin{align}\label{eq: nonlocal integral equation}
			\Lambda_h(z)[\psi]=  -z^2 \frac{\partial v^{\text{ext}}}{\partial \nu},
		\end{align}
		where 
		\[
		\Lambda_h(z):=\left( z^2 I+\frac12I +\KK^*\right)\left( -\frac12 I +\KK^{*,\frac{z}{h}} \right)-    \left( -\frac14 I +(\KK^*)^2 \right) (\SS)^{-1}  \SS^{\frac{z}{h}}.
		\]
		Then the boundary integral representation \eqref{eq: layer potential representation} solves the system \eqref{eq:nonlocal scattering problem}, with $\zeta = \psi/h$, and $p$ and $q$ defined by \eqref{eq: p(psi)} and \eqref{eq: q(psi)} as functions of $\zeta$ and $v^{\text{ext}}$.
		
		Conversely, if $\rho\in H^1(D)$, $u\in H^1(D)$, and $v\in H^1_{\mathrm{loc}}(\mathbb{R}^3\setminus\overline{D})$ solve \eqref{eq:nonlocal scattering problem}, and if $z$ is such that $\frac{z^2}{h^2}$ is not a Dirichlet eigenvalue of $D$ and $\frac{z}{h}$ is not a scattering resonance of $\SS^{\frac{z}{h}}$, then there exist $\zeta,p,q\in H^{-\frac{1}{2}}(\partial D)$ such that \eqref{eq: layer potential representation} holds.
	\end{proposition}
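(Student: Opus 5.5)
The plan is to verify the forward direction by direct substitution: each PDE in \eqref{eq:nonlocal scattering problem} is checked directly, and the three transmission conditions are reduced to a linear system in $(\zeta,p,q)$. Eliminating $p$ and $q$ from that system should leave exactly \eqref{eq: nonlocal integral equation}. The converse then comes from reading the same elimination backwards, using invertibility of the relevant boundary operators.

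\textbf{Interior and exterior equations.} By \eqref{eq: definition of z^2} we have $\varepsilon/(\varepsilon-1)=-z^2$, so the first equation reads $(\Delta+z^2/h^2)\rho=0$ in $D$. This holds for $\rho=\SS^{\frac zh}[\zeta]$ because $\Gamma^{z/h}$ is the fundamental solution of $\Delta+z^2/h^2$. Since $\SS[p]$ is harmonic in $D$, we get $h\Delta u=\frac{h^2}{z^2}\Delta\rho=-\rho$. Moreover $\Delta v=\Delta v^{\mathrm{ext}}=f$ outside $\overline D$, with the sign fixed by the convention in \eqref{eq: Green function}. The decay condition holds because $\SS[q]$ and the Newton potential decay at infinity.

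\textbf{Transmission conditions.} Set $A:=-\frac12 I+\KK^{*,\frac zh}$ and use the jump relation \eqref{jump1}. The three conditions become
\begin{align*}
&\tfrac{h}{z^2}\SS^{\frac zh}[\zeta]+\SS[p]=\SS[q]+v^{\mathrm{ext}},\\
&\bigl(-\tfrac12 I+\KK^*\bigr)[p]=-\tfrac{h}{z^2}(z^2+1)A[\zeta],\\
&\bigl(\tfrac12 I+\KK^*\bigr)[q]+\tfrac{\partial v^{\mathrm{ext}}}{\partial\nu}=-hA[\zeta].
\end{align*}
The operator $\frac12 I+\KK^*$ is invertible on $H^{-\frac12}(\partial D)$ and $\SS$ is invertible in $\mathbb R^3$. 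I will therefore define $q$ from the third equation and $p:=\SS^{-1}\bigl[\SS q+v^{\mathrm{ext}}-\frac{h}{z^2}\SS^{\frac zh}\zeta\bigr]$ from the first. These should be the formulas \eqref{eq: p(psi)} and \eqref{eq: q(psi)}.

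With $p$ and $q$ so defined, the first and third conditions hold by construction, and it remains to verify the second. Apply $-\frac12 I+\KK^*$ to the expression for $p$ and use two identities: the Calder\'on commutation $\KK^*\SS=\SS\KK$, which gives $(-\frac12 I+\KK^*)(\frac12 I+\KK^*)\SS^{-1}=(-\frac14 I+(\KK^*)^2)\SS^{-1}$, and the interior Dirichlet-to-Neumann identity $(-\frac12 I+\KK^*)\SS^{-1}[v^{\mathrm{ext}}]=\partial_\nu v^{\mathrm{ext}}$, valid since $v^{\mathrm{ext}}$ is harmonic in $D$. Multiplying by $z^2/h$, a short computation collapses the coefficient of $A$:
\[
-(z^2+1)\bigl(\tfrac12 I+\KK^*\bigr)+z^2\bigl(-\tfrac12 I+\KK^*\bigr)=-\bigl(z^2 I+\tfrac12 I+\KK^*\bigr).
\]
The second condition then becomes precisely $\Lambda_h(z)[h\zeta]=-z^2\partial_\nu v^{\mathrm{ext}}$, up to tracking signs, which is \eqref{eq: nonlocal integral equation} with $\psi=h\zeta$.

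\textbf{Main obstacle.} The operator $-\frac12 I+\KK^*$ has a one-dimensional kernel spanned by the equilibrium density $\varphi_0=\SS^{-1}[1]$. So \eqref{eq: nonlocal integral equation} only yields the second condition modulo $\varphi_0$, and the residual $r:=\partial_\nu u|_-+h\partial_\nu\rho$ is a priori a multiple of $\varphi_0$. To kill it I will pair $r$ with the constant $1$, i.e.\ compare total fluxes. By Green's formula, $\int_{\partial D}\partial_\nu u=-\frac1h\int_D\rho$ and $\int_{\partial D}h\partial_\nu\rho=-\frac{z^2}{h}\int_D\rho$. These must be matched against $\int_{\partial D}\partial_\nu v$, computed from the exterior representation. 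The idea is to use the first and third conditions together with $\int_{\partial D}(-\frac12 I+\KK^*)p=0$ to show that $\langle r,1\rangle=0$. I expect this bookkeeping, or possibly a slight adjustment of the definition of $p$ so that it absorbs the $\varphi_0$-component, to be the delicate point.

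\textbf{Converse.} If $z^2/h^2$ is not a Dirichlet eigenvalue and $z/h$ is not a resonance, then $\SS^{\frac zh}:H^{-\frac12}(\partial D)\to H^{\frac12}(\partial D)$ is invertible. I then set $\zeta:=(\SS^{\frac zh})^{-1}[\rho|_{\partial D}]$; uniqueness for the interior Dirichlet problem of $\Delta+z^2/h^2$ gives $\rho=\SS^{\frac zh}[\zeta]$ in $D$. The function $u-\frac{h}{z^2}\rho$ is harmonic in $D$, so it equals $\SS[p]$ with $p=\SS^{-1}$ of its trace. Similarly $v-v^{\mathrm{ext}}$ is harmonic outside $\overline D$ and decays, so it equals $\SS[q]$ with $q=\SS^{-1}$ of its trace; here the invertibility of $\SS$ in three dimensions is used.
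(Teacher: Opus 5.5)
Your route is the same as the paper's. You impose the jump relations, define $p$ from the continuity condition and $q$ from the exterior flux condition, and eliminate both. For the converse you invert $\SS^{\frac zh}$ and $\SS$. Your converse is fine, and your $q=\SS^{-1}[(v-v^{\mathrm{ext}})|_{\partial D}]$ even corrects a sign slip in the paper.

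What needs correcting is your ``main obstacle'': it does not exist, and your forward direction is already complete before you raise it. Define $r:=\partial_\nu u|_-+h\,\partial_\nu\rho|_-$, with $p,q$ chosen as you do. Then your elimination is an exact identity:
\[
\Bigl(\tfrac12 I+\KK^*\Bigr)[r]=\frac{1}{z^2}\Bigl(\Lambda_h(z)[\psi]+z^2\frac{\partial v^{\mathrm{ext}}}{\partial\nu}\Bigr),\qquad \psi=h\zeta .
\]
The operator $-\frac12 I+\KK^*$ enters in only two places. One is evaluating $\partial_\nu\SS[p]|_-$ for an already determined $p$; the other is the formula $\partial_\nu v^{\mathrm{ext}}=(-\frac12 I+\KK^*)\SS^{-1}[v^{\mathrm{ext}}]$. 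It is never inverted and never applied to both sides of an equation. Apart from the nonzero scalar $z^2/h$, the only operator applied to both sides is $\frac12 I+\KK^*$, which is invertible on $H^{-\frac12}(\partial D)$. So the integral equation gives $r=0$ exactly, not merely modulo $\varphi_0$. The kernel of $-\frac12 I+\KK^*$ would matter only if you solved the second condition for $p$. In that case $u=v$ would hold only up to an additive constant.

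Your proposed remedies would also fail.
\begin{itemize}
\item Adjusting $p$ cannot help. Since $r$ depends on $p$ only through $(-\frac12 I+\KK^*)[p]$, adding $c\varphi_0$ to $p$ leaves $r$ unchanged. It also breaks $u=v$, because $\SS[\varphi_0]=1$ on $\partial D$.
\item The flux argument cannot close from the first and third conditions alone. Green's formula gives $\langle r,1\rangle=-\frac{1+z^2}{h}\int_D\rho$, with $1+z^2=\frac{1}{1-\varepsilon}\neq0$. So the flux condition amounts to zero total charge, $\int_D\rho=0$. Those two conditions do not imply this; it follows only from the integral equation, through the identity above.
\end{itemize}
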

	
	\begin{proof}
		The representation \eqref{eq: layer potential representation} automatically satisfies the first three equations of \eqref{eq:nonlocal scattering problem}, namely the Helmholtz equation for $\rho$ and the Laplace equations for $u$ and $v$. To determine $\zeta$, $p$, and $q$, we impose the boundary conditions. Using the jump relations \eqref{jump1}, we obtain a coupled system for the boundary densities:
		\begin{align}
			p &= q -\frac{h}{z^2}\SS^{-1}[\rho] + \SS^{-1}[v^{\text{ext}}],\label{eq: p(psi)}\\
			\frac{z^2}{1+z^2} \left( -\frac12 I +\KK^* \right) p &= -h \frac{\partial \rho}{\partial \nu}, \nonumber\\
			\left( \frac12 I +\KK^* \right) q &= -h  \frac{\partial \rho}{\partial \nu} - \frac{\partial v^{\text{ext}}}{\partial \nu}. \label{eq: q(psi)}
		\end{align}
		Eliminating $p$ and $q$, we arrive at a single equation for $\rho$:
		\begin{align}\label{eq: equation rho}
			&\left(z^2 I+\frac12I +\KK^*\right) \left[\frac{\partial \rho}{\partial \nu}\right]  -  \left( -\frac14 I +(\KK^*)^2 \right) (\SS)^{-1}  [\rho] \nonumber\\
			&= -\frac{z^2}{h} \left( \left( -\frac14 I +(\KK^*)^2 \right) (\SS)^{-1}[v^{\text{ext}}] - \left( -\frac12 I +\KK^* \right) \left[\frac{\partial v^{\text{ext}}}{\partial \nu} \right] \right).
		\end{align}	
		The left-hand side of \eqref{eq: equation rho} can be further developed by using the representation $\rho = \SS^{\frac{z}{h}}[\zeta]$:
		\begin{align*}
			&\left( z^2 I+\frac12I +\KK^*\right) \left[\frac{\partial \rho}{\partial \nu}\right]  -  \left( -\frac14 I +(\KK^*)^2 \right) (\SS)^{-1}  [\rho] \nonumber\\
			&= \left(z^2 I+\frac12I +\KK^*\right)\left( -\frac12 I +\KK^{*,\frac{z}{h}} \right)[\zeta]  -   \left( -\frac14 I +(\KK^*)^2 \right) (\SS)^{-1}  \SS^{\frac{z}{h}}[\zeta].
		\end{align*}	
		The right-hand side of \eqref{eq: equation rho} can also be simplified by noting that $v^{\text{ext}}$ is harmonic in $D$, and therefore
		\[
		\frac{\partial v^{\text{ext}}}{\partial \nu} 
		=
		\left(-\frac12 I+\KK^*\right)\SS^{-1}[v^{\text{ext}}],
		\]
		which implies
		\[
		\left( -\frac14 I +(\KK^*)^2 \right) \SS^{-1}[v^{\text{ext}}]
		-
		\left( -\frac12 I +\KK^* \right)\!\left[\frac{\partial v^{\text{ext}}}{\partial \nu} \right]
		=
		\frac{\partial v^{\text{ext}}}{\partial \nu}.
		\]	
		Combining the previous identities proves the first claim of the proposition.
		
		\medskip
		
		To prove the converse, let $\rho$, $u$, and $v$ solve \eqref{eq:nonlocal scattering problem}. Since $\SS$ is invertible, and $\SS^{\frac{z}{h}}$ is invertible whenever $\frac{z}{h}$ is neither a Dirichlet eigenvalue of $D$ nor a scattering resonance of $\SS^{\frac{z}{h}}$, the representation \eqref{eq: layer potential representation} holds with
		\[
		\zeta := \left(\SS^{\frac{z}{h}}\right)^{-1}[\rho], \qquad 
		p := \SS^{-1}[u] + h \frac{\varepsilon-1}{\varepsilon}\SS^{-1}[\rho],\qquad  
		q := \SS^{-1}\left[v^{\text{ext}} - v\right].
		\]
	\end{proof}
	
	\medskip 
	The integral equation \eqref{eq: nonlocal integral equation} may be viewed as a nonlocal analogue of the local boundary integral formulation \eqref{eq: NP integral equation}, in the sense that it fully characterises the nonlocal quasi-static hydrodynamic Drude problem \eqref{eq:nonlocal scattering problem}. In fact, in smooth domains, the operator 
	\(\Lambda_h(z)\) may be interpreted as a singular perturbation (up to a multiplicative constant) of 
	\begin{equation*}
		z^2I+ \frac{1}{2}I + \KK^*
	\end{equation*}
	as \(h\to0\). This observation will be further discussed in 
	Section~\ref{sec: spectral asymptotics} in the context of eigenvalue perturbations.
	
	Note that, starting from \eqref{eq: equation rho}, one could alternatively formulate an integral equation directly for the trace of $\rho$, similar to \cite{Yan:13}:
	\begin{align}\label{eq: equation rho 2}
		\left( z^2 I+\frac12 I +\KK^*\right) \mathrm{DtN}_h(z)[\rho]
		-
		\left( -\frac14 I +(\KK^*)^2 \right) (\SS)^{-1}[\rho]
		=
		-\frac{z^2}{h}\frac{\partial v^{\mathrm{ext}}}{\partial \nu}.
	\end{align}
	Here $\mathrm{DtN}_h(z)$ denotes the Dirichlet-to-Neumann map associated with the interior Helmholtz problem
	\[
	\Delta \rho + \frac{z^2}{h^2}\rho = 0 \quad \text{in } D.
	\]
	
	While formally equivalent, this formulation involves the Helmholtz Dirichlet-to-Neumann operator and therefore inherits its singularities at the interior Dirichlet spectrum. In particular, the operator in \eqref{eq: equation rho 2} becomes unbounded at those values, which complicates the application of analytic Fredholm theory.
	
	The full boundary integral formulation \eqref{eq: nonlocal integral equation} is instead well defined for all $z\in\mathbb{C}$, but it introduces additional non-physical resonances arising from the radiation condition in the Helmholtz layer potentials. These correspond to scattering resonances of the single layer potential: there exist $\psi$ and complex $z$ such that
	\[
	\SS^{\frac{z}{h}}[\psi] = 0,
	\qquad
	\left( -\frac12 I +\KK^{*,\frac{z}{h}} \right)[\psi] = 0.
	\]
	These resonances are not physically relevant since they generate a vanishing field inside $D$. Moreover, due to the radiation condition satisfied by the fundamental solution \eqref{eq: Green function}, they lie in the lower half of the complex plane, which makes them easily distinguishable from the plasmonic resonances of interest.
	
	The framework of Proposition~\ref{prop1}, therefore, has the advantage of being amenable to analytic Fredholm theory, which will allow us to define and analyse the physically relevant nonlocal plasmonic resonances. To that end, it will be convenient to work with the following reformulation of \eqref{eq: nonlocal integral equation}.
	
	\begin{proposition} \label{prop2}
		The integral equation \eqref{eq: nonlocal integral equation} is equivalent to
		\begin{align}\label{eq: second nonlocal integral equation}
			\tilde{\Lambda}_h(z)[\psi] = \frac{\partial v^{\text{ext}}}{\partial \nu},
		\end{align}
		where
		\begin{align}
			\tilde{\Lambda}_h(z)&:= -\frac12 I +\KK^* + \mathcal{L}(z), \label{eq: tilde{Lambda}_h(z)}\\
			\mathcal{L}(z)&:=\frac{1}{h^2} \left( z^2 I+\frac12I +\KK^*\right) \KK_{1}^{*,\frac{z}{h}}  -\frac{1}{h^2}\left( -\frac14 I +(\KK^*)^2 \right) (\SS)^{-1} \SS_{1}^{\frac{z}{h}}.\nonumber
		\end{align}
		All the operators above are defined in Section~\ref{sec: layer potentials}. Moreover, the operator-valued map $z\mapsto\mathcal{L}(z)$ is analytic and $\mathcal{L}(z): H^{-\frac{1}{2}}(\partial D)\to H^{-\frac{1}{2}}(\partial D)$ is compact for all $z\in\mathbb{C}$.
	\end{proposition}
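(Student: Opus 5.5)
The plan is to substitute the kernel decomposition \eqref{eq: decomposition Gamma_k} with $k=z/h$ directly into $\Lambda_h(z)$, and to check that $\Lambda_h(z)=z^2\tilde\Lambda_h(z)$ up to the sign convention on $\psi$. Two consequences of \eqref{eq: decomposition Gamma_k} come first:
\[
\SS^{\frac zh}[\psi]=\SS[\psi]-\frac{iz}{4\pi h}\langle \psi,1\rangle_{L^2(\partial D)}\,1+\frac{z^2}{h^2}\SS_1^{\frac zh}[\psi],
\qquad
\KK^{*,\frac zh}=\KK^*+\frac{z^2}{h^2}\KK_1^{*,\frac zh}.
\]
The second identity holds because the constant term $-\frac{ik}{4\pi}$ has zero normal derivative.

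Inserting these into $\Lambda_h(z)$ and collecting terms gives three pieces, handled in order.
\begin{enumerate}
\item \emph{The $k$-independent piece.} It equals
\[
\Bigl(z^2I+\tfrac12I+\KK^*\Bigr)\Bigl(-\tfrac12I+\KK^*\Bigr)-\Bigl(-\tfrac14I+(\KK^*)^2\Bigr).
\]
Since $\bigl(\tfrac12I+\KK^*\bigr)\bigl(-\tfrac12I+\KK^*\bigr)=-\tfrac14I+(\KK^*)^2$, this collapses to $z^2\bigl(-\tfrac12I+\KK^*\bigr)$.
\item \emph{The rank-one term.} It is $\frac{iz}{4\pi h}\langle\psi,1\rangle\bigl(-\tfrac14I+(\KK^*)^2\bigr)\SS^{-1}[1]$. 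Factor it as $\bigl(\tfrac12I+\KK^*\bigr)\bigl(-\tfrac12I+\KK^*\bigr)\SS^{-1}[1]$. By the jump relation \eqref{jump1}, $\bigl(-\tfrac12I+\KK^*\bigr)\SS^{-1}[1]$ is the interior normal derivative of $\SS[\SS^{-1}[1]]$. That function is harmonic in $D$ with boundary trace $1$, so it is $\equiv1$ in $D$, and its normal derivative vanishes. Hence this term drops out. This uses the invertibility of $\SS$ in three dimensions.
\item \emph{The $z^2/h^2$ terms.} These reproduce exactly $z^2\mathcal L(z)$.
\end{enumerate}

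Therefore $\Lambda_h(z)=z^2\tilde\Lambda_h(z)$. Since $\varepsilon\neq0$ forces $z\neq0$, dividing \eqref{eq: nonlocal integral equation} by $z^2$ yields \eqref{eq: second nonlocal integral equation}, and conversely. The overall minus sign on the right-hand side is absorbed by replacing $\psi$ with $-\psi$, equivalently $\zeta$ with $-\zeta$ in Proposition~\ref{prop1}. I would state this sign convention explicitly.

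For compactness, Lemma~\ref{lem: S_D1 and K_D1} gives that $\KK_1^{*,\frac zh}$ is compact on $H^{-\frac12}(\partial D)$ and that $\SS_1^{\frac zh}:H^{-\frac12}\to H^{\frac12}$ is compact. Moreover, $z^2I+\tfrac12I+\KK^*$ and $-\tfrac14I+(\KK^*)^2$ are bounded on $H^{-\frac12}$, and $\SS^{-1}:H^{\frac12}\to H^{-\frac12}$ is bounded. Each summand of $\mathcal L(z)$ is therefore a composition of bounded operators with a compact one, hence compact.

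For analyticity, the map $z\mapsto z/h$ is entire and $k\mapsto\SS_1^k,\KK_1^{*,k}$ are analytic by the same lemma. Products and sums with the polynomial-in-$z$ bounded factors remain analytic.

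The step needing the most care is the elimination of the rank-one constant term. It relies on identifying $\bigl(-\tfrac12I+\KK^*\bigr)\SS^{-1}[1]=0$ on a Lipschitz domain, which requires the invertibility of $\SS:H^{-\frac12}\to H^{\frac12}$ in $\mathbb R^3$ and uniqueness for the interior Dirichlet problem. Everything else is bookkeeping.
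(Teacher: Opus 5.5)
Your proposal is correct and follows the paper's proof: substitute the decomposition \eqref{eq: decomposition Gamma_k} with $k=z/h$, and eliminate the rank-one constant term via $\bigl(-\tfrac14 I+(\KK^*)^2\bigr)\SS^{-1}[1]=0$. The paper cites this as the eigenvalue $1/2$ of $\KK^*$ with eigenfunction $\SS^{-1}[\chi]$, whereas you justify it through the jump relation and uniqueness for the interior Dirichlet problem. You then read off $\Lambda_h(z)=z^2\tilde\Lambda_h(z)$, with compactness and analyticity inherited from Lemma~\ref{lem: S_D1 and K_D1}, exactly as the paper does. Your sign remark is also right: dividing \eqref{eq: nonlocal integral equation} by $z^2$ gives $\tilde\Lambda_h(z)[\psi]=-\frac{\partial v^{\mathrm{ext}}}{\partial\nu}$, a slip the paper passes over; it is harmless for the kernel, but it must be tracked via $\psi\mapsto-\psi$.
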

	
	\begin{proof}
		From the definitions of $\SS_{1}^{\frac{z}{h}}$ and $\KK_{1}^{*,\frac{z}{h}}$ in \eqref{eq: S_1} and \eqref{eq: K_1}, respectively, we have
		\begin{align*}
			\SS^{\frac{z}{h}}[\psi] &= \SS[\psi] - \frac{iz}{4\pi h} \int_{\partial D} \psi\, d\sigma + \frac{z^2}{h^2} \SS_{1}^{\frac{z}{h}}[\psi], 
			\\
			\KK^{*,\frac{z}{h}}[\psi] &= \KK^*[\psi] + \frac{z^2}{h^2}\KK_{1}^{*,\frac{z}{h}}[\psi]. 
		\end{align*}
		It is known that the value $1/2$ is always an eigenvalue of $\KK^*$ with eigenfunction proportional to $\SS^{-1}[\chi]$, where $\chi$ denotes the constant function on $\partial D$. Therefore, we have
		\[
		\left( -\frac14 I +(\KK^*)^2 \right) (\SS)^{-1}[\chi] = 0.
		\]
		Using the identities above, we obtain
		\begin{align*}
			\Lambda_h(z)
			&= \left( z^2 I+\frac12I +\KK^*\right)\left( -\frac12 I +\KK^{*,\frac{z}{h}} \right)-   \left( -\frac14 I +(\KK^*)^2 \right) (\SS)^{-1}  \SS^{\frac{z}{h}}\\
			&= z^2\left( -\frac12 I +\KK^*+ \underbrace{\frac{1}{h^2} \left( z^2 I+\frac12I +\KK^*\right) \KK_{1}^{*,\frac{z}{h}}  -\frac{1}{h^2}\left( -\frac14 I +(\KK^*)^2 \right) (\SS)^{-1} \SS_{1}^{\frac{z}{h}}}_{:= \mathcal{L}(z)}\right).
		\end{align*}
		Substituting this identity into \eqref{eq: nonlocal integral equation} yields \eqref{eq: second nonlocal integral equation}. The analyticity and compactness of $\mathcal{L}(z)$ follow directly from Lemma~\ref{lem: S_D1 and K_D1}.
	\end{proof}

	\section{The nonlocal plasmonic spectrum}\label{sec: nonlocal plasmonic eigenvalue}
	Before analysing the nonlocal problem, we briefly recall the spectral structure of the local quasi-static plasmonic model, which will serve as a point of comparison for our main results.
	\subsection{Review: the local plasmonic spectrum}
	In the local quasi-static regime, the electromagnetic response is described in terms of interior and exterior harmonic potentials \(u \in H^1(D)\) and \(v \in H^1_{\mathrm{loc}}(\mathbb{R}^3 \setminus \overline{D})\). These fields (solutions to the forced version of \eqref{eq:local plasmonic eigenvalue problem}) admit a representation in terms of static single-layer potentials with density \(\varphi \in H^{-1/2}(\partial D)\) (see, for instance, \cite{Ando:16, Ammari:17}), which satisfies the boundary-integral equation
	\begin{equation}\label{eq: NP integral equation}
		\left(
		z^2 I + \frac{1}{2}I + \KK^*
		\right)[\varphi]
		=
		\frac{\partial v^{\mathrm{ext}}}{\partial \nu},
	\end{equation}
	where \(v^{\mathrm{ext}}\) denotes the external forcing defined in \eqref{eq: v^ext}.
	
	This boundary integral formulation completely characterises the plasmonic response of the particle. As such, the spectral properties of the Neumann--Poincar\'e operator \(\KK^*\) determine the resonant behaviour: plasmonic resonances occur, through the relation \(z^2 + \frac{1}{2} = -\frac{\varepsilon + 1}{2(\varepsilon - 1)}\), at those values of the permittivity \(\varepsilon\) for which the operator fails to be invertible. Hence,  the local plasmonic resonant set is given by
	\begin{equation}\label{eq: sigma_local}
		\sigma_{\mathrm{local}}
		:=
		\left\{
		\varepsilon \in \mathbb{C} \;\middle|\;
		-\frac{\varepsilon + 1}{2(\varepsilon - 1)}I + \KK^*
		\text{ is not invertible}
		\right\}.
	\end{equation}
	We refer to \(\sigma_{\mathrm{local}}\) as the \emph{local plasmonic spectrum}.
	
	Equivalently, \emph{local plasmonic modes} and associated \emph{local plasmonic eigenvalues}, are characterised by the so-called \textit{plasmonic eigenvalue problem} \cite{Grieser:14}:
	
	\medskip
	\begin{center}
		\(\text{Find } \varepsilon \in \mathbb{C},\, u \in H^{1}(D), \text{ and } v \in H^{1}_{\mathrm{loc}}(\mathbb{R}^3 \setminus \overline{D}) \text{ such that}\)
		\begin{equation}\label{eq:local plasmonic eigenvalue problem}
			\left|
			\begin{array}{ll}
				\Delta u = 0, & \text{in } D,\\[3pt]
				\Delta v = 0, & \text{in } \mathbb{R}^3 \setminus \overline{D},\\[5pt]
				u = v, & \text{on } \partial D,\\[3pt]
				\varepsilon \frac{\partial u}{\partial \nu}
				= \frac{\partial v}{\partial \nu},
				& \text{on } \partial D,\\[5pt]
				|v(x)| \to 0, & \text{as } |x| \to \infty.
			\end{array}
			\right.
		\end{equation}
	\end{center}
	
	The following result, which stems directly from the well-known spectral properties of $\KK^*$  \cite{Perfekt:14, Ammari_book:18, perfekt:2019}, summarises the classical spectral properties of the local quasi-static model.
	\begin{theorem}\label{thm: theorem local plasmonic spectrum}
		Let \(D\) be a bounded Lipschitz domain. Then the local plasmonic spectrum \(\sigma_{\mathrm{local}}\) satisfies:
		\begin{enumerate}
			\item If \(\partial D\) is of type $C^{1,\alpha}$, with $\alpha>0$, then \(\sigma_{\mathrm{local}}\) is discrete and consists of negative real numbers accumulating at \(-1\).
			\item If \(\partial D\) contains a geometric singularity (e.g., corners in two dimensions, or edges and vertices in three dimensions), then \(\sigma_{\mathrm{local}}\) contains a continuous component. The associated (generalised) plasmonic eigenmodes $u$ and $v$ fail to belong to \(H^1(D)\) and \(H^{1}_{\mathrm{loc}}(\mathbb{R}^3 \setminus \overline{D})\), respectively.
		\end{enumerate}
	\end{theorem}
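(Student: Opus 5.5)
The plan is to reduce both statements to spectral facts about the Neumann--Poincaré operator $\KK^*$ on $H^{-\frac12}(\partial D)$, through the Möbius map
\[
\lambda(\varepsilon) := \frac{\varepsilon+1}{2(\varepsilon-1)}, \qquad \varepsilon = \frac{2\lambda+1}{2\lambda-1}.
\]
By \eqref{eq: sigma_local}, $\varepsilon\in\sigma_{\mathrm{local}}$ if and only if $\lambda(\varepsilon)\in\sigma(\KK^*)$. The value $\varepsilon=1$ corresponds to $\lambda=\infty$ and never occurs, because $\KK^*$ is bounded.

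\textbf{Symmetrisation.} First, I will recall the symmetrisation principle. The Plemelj identity $\SS\KK^*=\KK\SS$ shows that $\KK^*$ is self-adjoint on $H^{-\frac12}(\partial D)$ equipped with the inner product $\langle\varphi,\psi\rangle_{\mathcal H^*}:=-\langle\varphi,\SS[\psi]\rangle_{L^2(\partial D)}$. This product is equivalent to the standard one, since $-\SS$ is positive definite in three dimensions. Hence $\sigma(\KK^*)$ is real, and by the known bounds it is contained in $(-\frac12,\frac12]$, with $\frac12$ the eigenvalue associated with $\SS^{-1}[\chi]$.

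I then need to exclude $\frac12$ from the relevant set. An eigenfunction with eigenvalue $\frac12$ gives $\partial_\nu u=0$ in $D$, so $u$ is constant, and then $v$ must vanish at infinity. Alternatively, $\lambda=\frac12$ would correspond to $\varepsilon=\infty$. Either way, $\lambda=\frac12$ contributes nothing. For $\lambda\in(-\frac12,\frac12)$ we get $\varepsilon=\frac{2\lambda+1}{2\lambda-1}\in(-\infty,0)$, so the whole local spectrum is negative real. Note also that $\lambda=0$ maps to $\varepsilon=-1$.

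\textbf{Part 1.} If $\partial D\in C^{1,\alpha}$, the kernel of $\KK^*$ is weakly singular, bounded by $C|x-y|^{-2+\alpha}$. Therefore $\KK^*$ is compact on $L^2(\partial D)$, and also on $H^{-\frac12}(\partial D)$, for example by interpolation or by duality with $\KK$ on $H^{\frac12}$. A compact self-adjoint operator on an infinite-dimensional space has a discrete spectrum of real eigenvalues of finite multiplicity. These eigenvalues accumulate only at $0$, and infinitely many are nonzero; otherwise $\KK^*$ would have finite rank, which is impossible because $\SS$ is not of finite rank.

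Since $\lambda\mapsto\varepsilon$ is continuous at $0$ with value $-1$, the set $\sigma_{\mathrm{local}}$ is a discrete set of negative numbers accumulating only at $-1$. I also check the equivalence with \eqref{eq:local plasmonic eigenvalue problem}. Setting $u=v=\SS[\varphi]$ and using the jump relations \eqref{jump1}, the transmission condition becomes $(\lambda I-\KK^*)\varphi=0$. So eigenpairs of the plasmonic problem correspond exactly to eigenfunctions of $\KK^*$.

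\textbf{Part 2.} For Lipschitz surfaces with edges or vertices, I will invoke the results of \cite{Perfekt:14, perfekt:2019}. For a two-dimensional corner of angle $\alpha$, and for conical points and edges in three dimensions, the essential spectrum of $\KK^*$ on the energy space is a nontrivial interval, e.g.\ $[-\frac12|1-\alpha/\pi|,\frac12|1-\alpha/\pi|]$ for planar corners. Through the map $\lambda\mapsto\varepsilon$, this interval becomes a continuous subset of $\sigma_{\mathrm{local}}$.

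For the failure of $H^1$ regularity, I will use the fact that every point of the essential spectrum that is not an eigenvalue is an approximate eigenvalue. So there exist Weyl sequences $\varphi_n$ with $\|\varphi_n\|_{\mathcal H^*}=1$, $\varphi_n\rightharpoonup0$ and $(\lambda-\KK^*)\varphi_n\to0$; these sequences concentrate at the singularity. Their weak limits in any space in which $\SS[\varphi]$ would be a finite-energy ($H^1$) solution must vanish. The generalised eigenfunctions are then the singular corner modes, which behave like $r^{i\tau}$, i.e.\ oscillating and non-square-integrable in gradient. These can be computed explicitly in a Mellin-transform analysis near the vertex, with $r^{i\tau}$ having infinite Dirichlet energy, and they fail to lie in $H^1$.

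\textbf{Main obstacle.} This last step is the hardest part: making precise what a generalised eigenmode is and proving its non-membership in $H^1$. My first attempt would be to quote the Mellin-based corner analysis of \cite{BonnetBenDhia:2013, perfekt:2021} rather than reprove it here.
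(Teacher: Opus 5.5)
Your route is the paper's route. The paper gives no independent proof: it only observes that $\varepsilon\in\sigma_{\mathrm{local}}$ exactly when $\frac{\varepsilon+1}{2(\varepsilon-1)}\in\sigma(\KK^*)$, and combines this with the known spectral theory of $\KK^*$ (symmetrisation via $\SS\KK^*=\KK\SS$, $\sigma(\KK^*)\subset(-\frac12,\frac12]$, compactness for $C^{1,\alpha}$ boundaries). For part~2 it relies on the cited essential-spectrum results and the Mellin/corner-mode analysis. Your Möbius bookkeeping (negativity, exclusion of $\lambda=\frac12$ as $\varepsilon=\infty$, $\lambda=0\mapsto\varepsilon=-1$) matches this, as does your decision to quote the corner analysis for the non-$H^1$ modes. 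Note only that your Weyl-sequence paragraph adds nothing rigorous: a weakly null singular sequence says nothing by itself about a limiting mode, so the real content is in the cited Mellin analysis.

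There is, however, one incorrect step in Part~1: ``infinitely many are nonzero; otherwise $\KK^*$ would have finite rank, which is impossible because $\SS$ is not of finite rank.'' Nothing ties the rank of $\KK^*$ to that of $\SS$, and the inference fails in general. For the unit disk in $\mathbb{R}^2$ one has $\KK^*[\varphi]=\frac{1}{4\pi}\int_{\partial D}\varphi\,d\sigma$, a rank-one operator, so $\sigma_{\mathrm{local}}=\{-1\}$ with no accumulating sequence, even though $\SS$ has infinite rank. Without this step you only get that $\sigma_{\mathrm{local}}$ is a discrete set of negative numbers whose only possible accumulation point is $-1$. That is weaker than the statement, and weaker than the ``infinitely many surface modes'' contrast the paper draws later.

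In three dimensions the conclusion is true, but it needs its own argument or citation. For $C^2$ boundaries, one argument runs as follows.
\begin{itemize}
\item Take any $x\in\partial D$ where the second fundamental form $\mathrm{II}_x$ is nonzero. Such points form an open nonempty set on a closed surface, e.g.\ around the point farthest from the origin.
\item At such $x$, the kernel satisfies $\partial_{\nu(x)}\Gamma^0(x-y)\approx c\,\mathrm{II}_x(\tau,\tau)/|x-y|$ with $c\neq0$, where $\tau$ is the unit tangent direction from $x$ towards $y$. So this kernel section is unbounded near $x$, while it is smooth away from $x$.
\item Hence sections based at distinct such points are linearly independent, the kernel cannot be degenerate, and $\KK^*$ is not of finite rank.
\end{itemize}
In two dimensions the kernel is bounded, which is exactly why the disk escapes. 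Alternatively, cite the Weyl-type eigenvalue asymptotics for $\KK^*$ on smooth surfaces, or, for the sphere, the explicit eigenvalues $\frac{1}{2(2\ell+1)}$ from Appendix~\ref{sec: Appendix sphere}.
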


	\subsection{Definition of the nonlocal spectrum}	
	Excluding the scattering resonances associated with $\SS^{\frac{z}{h}}$, the integral equation \eqref{eq: second nonlocal integral equation}, fully captures the physics of the nonlocal system \eqref{eq:nonlocal scattering problem}. We therefore use \eqref{eq: second nonlocal integral equation}, in analogy with \eqref{eq: sigma_local}, to define the \textit{nonlocal plasmonic spectrum} that characterise resonant states.
	
	\begin{definition}[Nonlocal plasmonic spectrum]\label{def: nonlocal resonant set}
		We define the nonlocal plasmonic spectrum as
		\begin{equation}\label{eq: sigma_nonlocal}
			\sigma_{\mathrm{nonlocal}}
			:=
			\left\{
			\varepsilon=\frac{z^2}{z^2+1}\in \mathbb{C} \;\middle|\;
			\begin{array}{l}
				\tilde{\Lambda}_h(z)\ \text{is not invertible,}\\[4pt]
				\SS^{\frac{z}{h}} : H^{-\frac{1}{2}}(\partial D) \to H^1(D)\ \text{is injective.}
			\end{array}
			\right\}
		\end{equation}
	\end{definition}
	
	Considering $\tilde{\Lambda}_h(z)$ instead of $\Lambda_h(z)$ in Definition~\ref{def: nonlocal resonant set} is particularly useful since, as we will show below, it allows us to prove that for any Lipschitz domain $D\subset\mathbb{R}^3$ the resonant set is discrete. This implies, via Proposition~\ref{prop1} and Proposition~\ref{prop2}, the existence of \textit{nonlocal plasmonic modes} and associated \textit{nonlocal plasmonic eigenvalues} which satisfy the following \textit{nonlocal plasmonic eigenvalue problem}:
	
	\medskip
	\begin{center}
		$\mbox{Find }\varepsilon \in \mathbb{C},\,\rho \in H^{1}(D),\,u \in H^{1}(D),\text{ and }v \in H^{1}_{\mathrm{loc}}(\mathbb{R}^3 \setminus \overline{D})\mbox{ such that}$
		\begin{equation}\label{eq:nonlocal plasmonic eigenvalue problem}
			\left|
			\begin{array}{ll}
				
				h^{2}\Delta \rho 
				= \dfrac{\varepsilon}{\varepsilon - 1}\rho & \text{in } D,\\[3pt]
				
				h\Delta u = -\rho & \text{in } D,\\[3pt]
				
				\Delta v = 0 & \text{in } \mathbb{R}^3 \setminus \overline{D},\\[5pt]
				
				u = v & \text{on }\partial D,\\[3pt]
				
				\dfrac{\partial u}{\partial\nu}
				=
				\dfrac{\partial v}{\partial\nu}
				=
				-h\dfrac{\partial \rho}{\partial\nu} & \text{on }\partial D,\\[5pt]
				
				|v(x)| \to 0 & \text{as } |x| \to \infty.
			\end{array}
			\right.
		\end{equation}
	\end{center}
	
	\subsection{Main result}\label{sec: main result}
	
	The following theorem constitutes the main result of this paper.	
	\begin{theorem}\label{thm: main result}
		Let \(D\) be a bounded Lipschitz domain. Then the nonlocal plasmonic spectrum \(\sigma_{\mathrm{nonlocal}}\) satisfies:
		\begin{enumerate}
			\item The spectrum $\sigma_{\mathrm{nonlocal}}$ remains discrete regardless of the Lipschitz regularity of $\partial D$. The nonlocal plasmonic eigenmodes $\rho$, $u$, and $v$ always belong to  \(H^1(D)\), \(H^1(D)\), and \(H^{1}_{\mathrm{loc}}(\mathbb{R}^3 \setminus \overline{D})\), respectively.
			\item If $\sigma_{\mathrm{nonlocal}}$ is infinite, then the only possible accumulation point is $1$.
		\end{enumerate}
	\end{theorem}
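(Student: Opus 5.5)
The plan is to apply the analytic Fredholm theorem to the family $z\mapsto\tilde{\Lambda}_h(z)$ of Proposition~\ref{prop2}, and then transfer the resulting information from the $z$-plane to the $\varepsilon$-plane through $\varepsilon=z^2/(z^2+1)$.

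\textbf{Step 1 (Fredholm structure).} By Proposition~\ref{prop2}, $\tilde{\Lambda}_h(z)=-\frac12 I+\KK^*+\mathcal{L}(z)$, where $\mathcal{L}(z)$ is compact and analytic on all of $\mathbb{C}$. The operator $-\frac12 I+\KK^*$ is Fredholm of index zero on $H^{-\frac12}(\partial D)$ for every bounded Lipschitz domain: it is the interior Neumann operator, its range is the codimension-one space of mean-zero densities, and its kernel is one-dimensional. This is the one place where the Lipschitz theory of $\KK^*$ enters, through the point $\frac12$ rather than the essential spectrum. The corner continuous spectrum of $\KK^*$ lies inside $(-\frac12,\frac12)$, so $\frac12$ is never hit, and this is exactly why no smoothness of $\partial D$ is needed. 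Hence $\tilde{\Lambda}_h(z)$ is an analytic family of index-zero Fredholm operators.

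\textbf{Step 2 (not identically singular).} This is the expected main obstacle, because $-\frac12 I+\KK^*$ itself is not invertible. I would look for one $z_0$ at which $\tilde{\Lambda}_h(z_0)$ is injective. The first attempt would use $z$ purely imaginary with $|z|$ large, so that $\varepsilon$ is real in $(0,1)$ and $k=z/h=it$ with $t\to\infty$. In that regime $\SS^{it}$ has an exponentially decaying kernel, and the interior problem $h^2\Delta\rho=-z^2\rho$ is coercive. If $\tilde{\Lambda}_h(z_0)[\psi]=0$, Propositions~\ref{prop1}–\ref{prop2} produce a solution of \eqref{eq:nonlocal plasmonic eigenvalue problem}. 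Pairing the equations with the conjugates $\bar\rho$, $\bar u$, $\bar v$ and integrating by parts gives an energy identity, schematically
\[
\int_D|\nabla u|^2+\int_{\mathbb{R}^3\setminus\overline D}|\nabla v|^2+c(z)\Bigl(h^2\int_D|\nabla\rho|^2+\frac{\varepsilon}{\varepsilon-1}\int_D|\rho|^2\Bigr)=0,
\]
with a positive constant $c(z)$. For real $\varepsilon\notin[0,1]$ (equivalently $\varepsilon/(\varepsilon-1)>0$), or after taking the imaginary part for nonreal $\varepsilon$, this forces $\rho=u=v=0$. The transmission condition $\partial_\nu u=-h\,\partial_\nu\rho$ couples the two energies with the correct sign. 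Injectivity of $\SS^{z/h}$ then gives $\psi=0$. This energy identity is also what I expect to show that every eigenvalue is real.

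\textbf{Step 3 (discreteness and regularity).} The analytic Fredholm theorem now shows that $\{z:\tilde{\Lambda}_h(z)\text{ not invertible}\}$ is discrete in $\mathbb{C}$. Its image under $z\mapsto z^2/(z^2+1)$, which is a finite-to-one analytic map, is discrete in $\mathbb{C}\setminus\{1\}$; the point $1$ is the image of $z=\infty$. For eigenmodes, a kernel element $\psi\in H^{-\frac12}(\partial D)$ yields $\rho=\SS^{z/h}[\psi/h]\in H^1(D)$. The densities $p$ and $q$ from \eqref{eq: p(psi)}–\eqref{eq: q(psi)} lie in $H^{-\frac12}$, since $\SS^{-1}$, $\partial_\nu\rho$ and $(\frac12 I+\KK^*)^{-1}$ all act boundedly on Lipschitz domains. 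Consequently $u\in H^1(D)$ and $v\in H^1_{\mathrm{loc}}$, which proves item 1.

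\textbf{Step 4 (accumulation).} An accumulation point of $\sigma_{\mathrm{nonlocal}}$ in $\mathbb{C}$ would have to come from either an accumulation of $z$'s or from $|z|\to\infty$. The first is excluded by Step 3. When $|z|\to\infty$ we have $\varepsilon=1-1/(z^2+1)\to1$. Hence the only possible accumulation point is $1$, which is item 2.
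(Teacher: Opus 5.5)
Your overall architecture matches the paper's, and Steps 1, 3 and 4 are essentially its argument: Fredholm index zero from $-\frac12 I+\KK^*$, the analytic Fredholm alternative, and transfer to $\varepsilon=z^2/(z^2+1)$ with $z\to\infty$ giving $\varepsilon\to 1$. The gap is in Step 2, which is the crux. The energy identity you write, with $+\int_D|\nabla u|^2$ and a positive $c(z)$, is false. Your conclusion from it, that every real $\varepsilon\notin[0,1]$ admits only the trivial solution, is also false. It would rule out all negative eigenvalues, i.e.\ all surface plasmons, and these exist. For the sphere, $\lambda^{(h)}_\ell(z)=0$ has roots with $z^2\approx-\frac{\ell+1}{2\ell+1}$, i.e.\ $\varepsilon\approx-\frac{\ell+1}{\ell}$ (cf.\ Figure~\ref{fig: eigenvalues}). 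Your identity also contradicts the local limit, where the correct balance is $\varepsilon\|\nabla u\|^2+\|\nabla v\|^2=0$. The coupling through $\partial_\nu u=-h\,\partial_\nu\rho$ simply has no definite sign. A smaller slip: $z=it$ with $t>1$ gives $\varepsilon=t^2/(t^2-1)>1$, not $\varepsilon\in(0,1)$.

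Your point $z_0=it$ with $t>1$ is nevertheless a genuine point of invertibility; what is missing is a correct identity. Split $u=\frac{h}{z^2}\rho+H$ with $H=\SS[p]$ harmonic in $D$. The transmission conditions become $\varepsilon\,\partial_\nu H=\partial_\nu v$ and $v-H=\frac{h}{z^2}\rho$ on $\partial D$. For real $z^2$, Green's formula then gives
\[
\|\nabla v\|^2_{L^2(\mathbb{R}^3\setminus\overline D)}+\varepsilon\|\nabla H\|^2_{L^2(D)}-\frac{h^2}{z^2}\|\nabla\rho\|^2_{L^2(D)}+\|\rho\|^2_{L^2(D)}=0 .
\]
All four terms are nonnegative exactly when $z^2<-1$, so $\rho=0$. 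Injectivity of $\SS^{z/h}$ for $\Im z>0$ then gives $\psi=0$. For $\varepsilon<0$ the term $\varepsilon\|\nabla H\|^2$ is negative, which is where surface modes live.

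The paper takes a different route: it proves invertibility for every $z$ with $z^2\notin\mathbb{R}$ and $\Im z>0$. It inverts $z^2I+\frac12 I+\KK^*$, symmetrises via $\KK^*\SS^{-1}=\SS^{-1}\KK$, and takes imaginary parts. It then contradicts the Dirichlet principle by comparing $\rho$ with the harmonic extension of its trace. This gives reality of the spectrum at the same time. If you want a volume-energy proof of that fact, pair the $\rho$-equation with $\bar u$ and the $u$-equation with $\bar\rho$. This yields $\Im(z^2)\bigl(\|\nabla u\|^2_{L^2(D)}+\|\nabla v\|^2_{L^2(\mathbb{R}^3\setminus\overline D)}+\|\rho\|^2_{L^2(D)}\bigr)=0$, which is what your Step 2 needed.
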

	
	\begin{proof}
		It is known that, in $H^{-\frac{1}{2}}(\partial D)$,
		\begin{equation}\label{eq: spectrum of K*}
			\sigma(\KK^*)\subset(-1/2,1/2],
		\end{equation}
		and $1/2$ is always an eigenvalue with a finite-dimensional eigenspace. It follows that for any Lipschitz domain $D$, the operator $-\frac12 I +\KK^*$ is Fredholm of index zero. This, together with the fact that $\mathcal{L}(z)$ is an analytic family of compact operators, implies that $\tilde{\Lambda}_h(z)$ is an analytic family of Fredholm operators of index \(0\). Thus, by the analytic Fredholm theorem, either		
		\begin{enumerate}
			\item \(\tilde{\Lambda}_h(z)\) is nowhere invertible on \(\mathbb{C}\), or
			\item if \(\tilde{\Lambda}_h(z_0)\) is invertible for some \(z_0\in\mathbb{C}\), then the set
			\[
			\{ z\in\mathbb{C} : \tilde{\Lambda}_h(z)\ \text{is not invertible} \}
			\]
			is discrete, with no accumulation points in \(\mathbb{C}\).
		\end{enumerate}
		
		We now show that the second alternative occurs. We proceed by contradiction. Suppose there exists \(z\) with \(z^2\notin\mathbb{R}\) such that \(\tilde{\Lambda}_h(z)\) is not invertible. Hence, since $\tilde{\Lambda}_h(z)$ is Fredholm, there must exists $\psi$ such that
		\begin{equation}\label{eq: non-invertibility cond1}
			\tilde{\Lambda}_h(z)[\psi] = 0.
		\end{equation}
		Recall from Proposition~\ref{prop2} that
		\[
		z^2 \tilde{\Lambda}_h(z)[\psi] = \Lambda_h(z)[\psi].
		\]
		Thus, assuming without loss of generality that \(z \neq 0\), condition \eqref{eq: non-invertibility cond1} reduces to
		\[
		\Lambda_h(z)[\psi] = 0.
		\]
		If, in addition, \(z/h\) is not a scattering resonance of \(\SS^{\frac{z}{h}}\) (which may occur when \(\Im z < 0\)), we have from \eqref{eq: equation rho 2} that 
		\begin{align*}
			\left( z^2 I+\frac12 I +\KK^* \right) \left[\frac{\partial \rho}{\partial \nu} \right] = \left( -\frac14 I +(\KK^*)^2 \right) (\SS)^{-1} [\rho].
		\end{align*}
		Under the assumptions above, the operator $ z^2 I+\frac12 I +\KK^*$ is invertible since $\sigma(\KK^*)$ is real in $H^{-\frac12}(\partial D)$. Thus, we have 
		\begin{align}	\label{rhoboundary2}
			\frac{\partial \rho}{\partial \nu}
			=
			\left( z^2 I+\frac12 I+\KK^* \right)^{-1}
			\left( -\frac14 I+(\KK^*)^2 \right)\SS^{-1}[\rho].
		\end{align}        
		Recall also that \(\rho\) satisfies
		\begin{equation*}
			\Delta \rho + \frac{z^2}{h^2}\rho = 0
			\qquad \text{in } D.
		\end{equation*}
		Multiplying by \(\bar{\rho}\), integrating over \(D\), and integrating by parts, we obtain
		\begin{equation}\label{eq: ibp rho proof main thm}
			\|\nabla \rho\|_{L^2(D)}^2
			-
			\frac{z^2}{h^2}\|\rho\|_{L^2(D)}^2
			=
			\left\langle \rho\,,\,\frac{\partial \rho}{\partial \nu}\right\rangle_{L^2(\partial D)},
		\end{equation}
		where $\left\langle \cdot,\cdot\right\rangle_{L^2(\partial D)}$ is seen (here and throughout) as the dual pairing between $H^{\frac12}(\partial D)$ and $H^{-\frac12}(\partial D)$. Then, using the boundary identity \eqref{rhoboundary2} it follows that
		\begin{equation}\label{eq: boundary rho first}
			\|\nabla \rho\|_{L^2(D)}^2
			-
			\frac{z^2}{h^2}\|\rho\|_{L^2(D)}^2
			=
			\left\langle
			\rho\,,\,
			\left( z^2 I+\frac12 I+\KK^* \right)^{-1}
			\left( -\frac14 I+(\KK^*)^2 \right)\SS^{-1}[\rho]
			\right\rangle_{L^2(\partial D)}.
		\end{equation}
		Define now
		\[
		\xi=(z^2 I+\tfrac12 I+\KK)^{-1}[\rho],
		\]
		so that
		\[
		\rho=(z^2 I+\tfrac12 I+\KK)[\xi].
		\]
		Using the symmetrisation identity (see, for instance, \cite{Ammari_book:09})
		\begin{equation}\label{eq: symmetrisation identity}
			\KK^*\SS^{-1}=\SS^{-1}\KK,
		\end{equation}
		we obtain
		\[
		\SS^{-1}[\rho]
		=
		(z^2 I+\tfrac12 I+\KK^*)\SS^{-1}[\xi],
		\]
		which, inserted into \eqref{eq: boundary rho first}, gives
		\begin{align*}
			\|\nabla \rho\|_{L^2(D)}^2
			-
			\frac{z^2}{h^2}\|\rho\|_{L^2(D)}^2
			&=
			\left\langle
			(z^2 I+\tfrac12 I+\KK)[\xi]
			\,,\,
			\left(-\tfrac14 I+(\KK^*)^2\right)\SS^{-1}[\xi]
			\right\rangle_{L^2(\partial D)} \\
			&=
			\overline{z^2}
			\left\langle
			\xi
			\,,\,
			\left(-\tfrac14 I+(\KK^*)^2\right)\SS^{-1}[\xi]
			\right\rangle_{L^2(\partial D)} \\
			&\quad+
			\left\langle
			\xi
			\,,\,
			\left(\tfrac12 I+\KK^*\right)
			\left(-\tfrac14 I+(\KK^*)^2\right)\SS^{-1}[\xi]
			\right\rangle_{L^2(\partial D)}.
		\end{align*}
		It is known that $\SS^{\frac12}\KK^*\SS^{-\frac12}$ is self-adjoint in $L^2(\partial D)$ (see, for instance, \cite{Ammari_book:09}). This implies that both 
		\[
		\left\langle
		\xi
		\,,\,
		\left(-\tfrac14 I+(\KK^*)^2\right)\SS^{-1}[\xi]
		\right\rangle_{L^2(\partial D)}
		\]
		and
		\[
		\left\langle
		\xi
		\,,\,
		\left(\tfrac12 I+\KK^*\right)
		\left(-\tfrac14 I+(\KK^*)^2\right)\SS^{-1}[\xi]
		\right\rangle_{L^2(\partial D)}
		\]
		are real. Taking imaginary parts we deduce that
		\[
		\frac{1}{h^2}\|\rho\|_{L^2(D)}^2
		=
		\left\langle
		\xi
		\,,\,
		\left(-\tfrac14 I+(\KK^*)^2\right)\SS^{-1}[\xi]
		\right\rangle_{L^2(\partial D)}.
		\]
		Substituting this back yields
		\begin{align}
			\|\nabla \rho\|_{L^2(D)}^2
			&=
			2\Re(z^2)
			\left\langle
			\xi
			\,,\,
			\left(-\tfrac14 I+(\KK^*)^2\right)\SS^{-1}[\xi]
			\right\rangle_{L^2(\partial D)}
			\nonumber\\
			&\quad+
			\left\langle
			\xi
			\,,\,
			\left(\tfrac12 I+\KK^*\right)
			\left(-\tfrac14 I+(\KK^*)^2\right)\SS^{-1}[\xi]
			\right\rangle_{L^2(\partial D)}.
			\label{eq:grad-rho-identity}
		\end{align}
		
		\medskip
		
		Now let \(w\) be the harmonic function in \(D\) with boundary value
		\[
		w=\rho
		\qquad\text{on }\partial D.
		\]
		Then
		\[
		\|\nabla w\|_{L^2(D)}^2
		=
		\left\langle
		\rho
		\,,\,
		\frac{\partial w}{\partial \nu}
		\right\rangle_{L^2(\partial D)}
		=
		\left\langle
		\rho
		\,,\,
		\left(-\tfrac12 I+\KK^*\right)\SS^{-1}[\rho]
		\right\rangle_{L^2(\partial D)},
		\]
		where the last identity follows from the representation formula for the interior Dirichlet problem,
		\[
		\frac{\partial w}{\partial \nu}
		=
		\left(-\tfrac12 I+\KK^*\right)\SS^{-1}[\rho].
		\]
		Using again
		\[
		\rho=(z^2 I+\tfrac12 I+\KK)[\xi],
		\]
		we obtain
		\begin{equation}\label{eq: norm grad w}
			\|\nabla w\|_{L^2(D)}^2
			=
			\left\langle
			\xi
			\,,\,
			\left(\overline{z^2}I+\tfrac12 I+\KK^*\right)
			\left(z^2 I+\tfrac12 I+\KK^*\right)
			\left(-\tfrac12 I+\KK^*\right)\SS^{-1}[\xi]
			\right\rangle_{L^2(\partial D)}.
		\end{equation}
		
		\medskip
		
		Combining \eqref{eq:grad-rho-identity} with \eqref{eq: norm grad w} gives
		\[
		\|\nabla \rho\|_{L^2(D)}^2
		-
		\|\nabla w\|_{L^2(D)}^2
		=
		-|z^2|^2
		\left\langle
		\xi
		\,,\,
		\left(-\tfrac12 I+\KK^*\right)\SS^{-1}[\xi]
		\right\rangle_{L^2(\partial D)}
		\le 0,
		\]
		where the last inequality stems from \eqref{eq: spectrum of K*} and the fact that $\SS^{\frac12}\KK^*\SS^{-\frac12}$ is self-adjoint in $L^2(\partial D)$. On the other hand, by the Dirichlet principle,
		\[
		\|\nabla \rho\|_{L^2(D)}^2
		>
		\min_{\substack{\tilde{w}\in H^1(D)\\ \tilde{w}=\rho \text{ on }\partial D}}
		\|\nabla \tilde{w}\|_{L^2(D)}^2
		=
		\|\nabla w\|_{L^2(D)}^2,
		\]
		which is a contradiction. Therefore, for any \(z\) such that \(z^2 \notin \mathbb{R}\) and \(\Im z > 0\) (so as to exclude scattering resonances), the operator \(\tilde{\Lambda}_h(z)\) is invertible. This shows that the second alternative holds and, through the relation
		\[
		z^2=\frac{\varepsilon}{1-\varepsilon},
		\]
		the resonant set can only correspond to real values of \(\varepsilon\).
		
		Since the non-invertibility points of \(\tilde{\Lambda}_h(z)\) form a discrete subset of \(\mathbb{C}\), any infinite sequence of resonances must diverge to infinity in the \(z\)-plane, implying that the corresponding permittivity values can only accumulate at \(\varepsilon=1\).
	\end{proof}
	
	\begin{corollary}[Well-posedness]
		A direct consequence of Theorem~\ref{thm: main result} is the well-posedness of system \eqref{eq:nonlocal scattering problem} for any $\varepsilon$ that is not in $\sigma_{\mathrm{nonlocal}}$. In physical systems, \eqref{eq:nonlocal scattering problem} is always well-posed since the permittivity $\varepsilon$ necessarily has a positive imaginary part to account for Ohmic losses, and consequently, $z^2$ lies in the upper half-plane.
	\end{corollary}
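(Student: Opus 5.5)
The corollary asserts that system \eqref{eq:nonlocal scattering problem} is well posed whenever $\varepsilon\notin\sigma_{\mathrm{nonlocal}}$, and in particular whenever $\Im\varepsilon>0$. The plan is to go through the boundary integral reformulation: Fredholm invertibility of $\tilde{\Lambda}_h(z)$ gives existence, the converse part of Proposition~\ref{prop1} gives uniqueness, and continuity of the layer potentials gives stability.

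\textbf{Existence.} Fix $\varepsilon\notin\sigma_{\mathrm{nonlocal}}$, $\varepsilon\neq 0,1$, and choose the branch of $z$ in \eqref{eq: definition of z^2} with $\Im z\ge 0$. This choice keeps $z/h$ away from the scattering resonances of $\SS^{\frac{z}{h}}$, which lie in the lower half-plane. By Theorem~\ref{thm: main result}, $\tilde{\Lambda}_h(z)$ is Fredholm of index zero and not in the discrete exceptional set, so it is invertible on $H^{-\frac12}(\partial D)$ with bounded inverse.
\begin{itemize}
\item Set $\psi=\tilde{\Lambda}_h(z)^{-1}[\partial_\nu v^{\mathrm{ext}}]$. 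By Proposition~\ref{prop2} this $\psi$ solves \eqref{eq: nonlocal integral equation}.
\item Proposition~\ref{prop1} then produces $\zeta=\psi/h$, together with $p,q$ from \eqref{eq: p(psi)}--\eqref{eq: q(psi)}.
\item These densities give a solution $(\rho,u,v)$ in $H^1(D)\times H^1(D)\times H^1_{\mathrm{loc}}$, by the mapping properties of $\SS^k$ and of $\SS^{-1}$, $\KK^*$.
\end{itemize}

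\textbf{Stability.} The map from the data $(f,g)$ to $\partial_\nu v^{\mathrm{ext}}$ is bounded. Composing with the bounded inverse of $\tilde{\Lambda}_h(z)$ and then with the layer potentials gives continuous dependence of $(\rho,u,v)$ on the data.

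\textbf{Uniqueness.} Suppose $(\rho,u,v)$ solves the homogeneous system.
\begin{itemize}
\item Assume first that $z^2/h^2$ is not a Dirichlet eigenvalue of $D$. The converse part of Proposition~\ref{prop1} gives densities $\zeta,p,q$ reproducing $(\rho,u,v)$.
\item Running the elimination of Proposition~\ref{prop1} backwards shows $\Lambda_h(z)[h\zeta]=0$, hence $\tilde{\Lambda}_h(z)[h\zeta]=0$. Invertibility forces $\zeta=0$, so $\rho=0$.
\item With $\rho=0$, $u$ and $v$ solve the homogeneous local transmission problem with equal Neumann traces and vanishing data at infinity, so $u=v=0$.
\end{itemize}

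\textbf{Main obstacle: the Dirichlet case.} The difficult step is the exceptional case where $z^2/h^2$ is a Dirichlet eigenvalue, so the representation $\rho=\SS^{\frac{z}{h}}[\zeta]$ is not guaranteed. Such eigenvalues are real and positive, so $z^2$ would be real and $\varepsilon$ real.
\begin{itemize}
\item In the physical case $\Im\varepsilon>0$ this does not happen: one computes $\Im z^2=\Im\varepsilon/|1-\varepsilon|^2>0$, so $z^2\notin\mathbb{R}$.
\item For real $\varepsilon\notin\sigma_{\mathrm{nonlocal}}$ I would not invert $\SS^{\frac{z}{h}}$. Instead I would show directly that the homogeneous system forces $\rho|_{\partial D}$ and $\partial_\nu\rho$ to satisfy \eqref{eq: equation rho} with zero right-hand side, then represent $\rho$ by Green's formula using both layer potentials.
\item If that direct argument is too delicate, I would restrict the uniqueness claim to $\varepsilon$ avoiding these finitely many (locally) exceptional real values.
\end{itemize}
Finally, for $\Im\varepsilon>0$ one has $z^2\notin\mathbb{R}$, so the contradiction argument in the proof of Theorem~\ref{thm: main result} already shows invertibility, which completes the physical statement.
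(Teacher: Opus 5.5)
Your plan follows the paper's route. The paper gives no separate argument and treats the corollary as immediate from the invertibility of $\tilde{\Lambda}_h(z)$ in Theorem~\ref{thm: main result}, combined with Propositions~\ref{prop1} and~\ref{prop2}. Your existence/stability/uniqueness split is a faithful expansion of that. For the physical case $\Im\varepsilon>0$ your argument is complete: with $z$ in the open first quadrant there are neither Dirichlet eigenvalues nor scattering resonances to handle.

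The gap is the case you flag and leave open: real $\varepsilon\in(0,1)$ with $z^2/h^2$ a Dirichlet eigenvalue. Excluding these values proves less than the corollary asserts. A Green's formula with both single and double layers does not feed into $\Lambda_h(z)$, which acts only on single-layer densities. The paper is silent here too, since the converse of Proposition~\ref{prop1} simply assumes the case away.

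The missing observation is that the Dirichlet hypothesis is an artefact of constructing $\zeta$ by inverting the boundary operator $\SS^{z/h}:H^{-\frac12}(\partial D)\to H^{\frac12}(\partial D)$. The representation of $\rho$ inside $D$ exists for every $z\neq0$ with $\Im z\ge0$, by the following argument.
\begin{enumerate}
\item The exterior Dirichlet problem for $\Delta+z^2/h^2$ with the outgoing condition is uniquely solvable for all such $z$. Uniqueness follows from Rellich's lemma for real $z$ and from an energy identity for $\Im z>0$; existence follows from the Fredholm alternative.
\item Let $U$ be the outgoing exterior solution with $U=\rho$ on $\partial D$, and set $\zeta:=\partial_\nu U|_+-\partial_\nu\rho|_-$.
\item Consider the function equal to $\rho$ in $D$ and to $U$ outside, minus $\SS^{z/h}[\zeta]$. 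By \eqref{jump1} it has no jump in trace or in normal derivative across $\partial D$. It is therefore an outgoing entire solution of the Helmholtz equation, hence zero.
\item Thus $\rho=\SS^{z/h}[\zeta]$ in $D$, with no assumption on Dirichlet eigenvalues.
\end{enumerate}
Your elimination then gives $\Lambda_h(z)[\zeta]=0$, so $\zeta=0$, $\rho=0$, and then $u=v=0$ as you showed. No exceptional real values need to be removed.

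The same exterior uniqueness also makes the injectivity clause in Definition~\ref{def: nonlocal resonant set} automatic for $\Im z\ge0$. Your existence step uses this implicitly when it passes from $\varepsilon\notin\sigma_{\mathrm{nonlocal}}$ to invertibility of $\tilde{\Lambda}_h(z)$, so it is worth stating.
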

	
	\subsection{Spectral regularization}
	We next discuss the contrast between Theorem~\ref{thm: theorem local plasmonic spectrum} with Theorem~\ref{thm: main result}, showing how $\sigma_{\text{nonlocal}}$ is fundamentally altered relative to $\sigma_{\text{local}}$.
	
	\paragraph{Discrete spectrum for Lipschitz domains}
	A first important difference concerns the structure of the plasmonic spectrum in the case of Lipschitz non-smooth geometries, such as domains with corners, edges, or conical singularities.
	
	In such geometries, the Neumann-Poincaré operator is no longer compact, and its spectrum develops a continuous component \cite{Perfekt:14, perfekt:2019}. As a result, local plasmonic quasi-modes arise, which remain bounded in \(L^2(D)\) (for the interior part of the mode) but fail to belong to \(H^1(D)\), with their singular behaviour concentrating near geometric irregularities. For example, in a two-dimensional domain containing a corner, \(L^2(D)\)-bounded solutions of the local plasmonic eigenvalue problem admit an ansatz of the form
	\[
	u(r,\theta)=g(\theta)\,e^{\mathrm{i}c\log r},
	\]
	where \(c\) is a positive constant and \((r,\theta)\) denote polar coordinates centred at the tip of the corner \cite{BonnetBenDhia:2013,bonnetbendhia:2016}. This results in the highly oscillatory behaviour of the mode as $r\to 0$.
	
	In contrast, the nonlocal plasmonic spectrum is discrete for any Lipschitz domain, including geometries with corners, edges, or conical singularities. Consequently, only genuine nonlocal plasmonic eigenvalues arise, each associated with a plasmonic mode that remains bounded in \(H^1(D)\). In fact, in the case of a two-dimensional corner, it can be shown (see Appendix~\ref{sec: Appendix corner}
	) that there exist constants \(C_{\rho}\), \(C_u\), and \(C_v\) such that the corresponding nonlocal plasmonic modes satisfy
	\[
	\rho(r,\theta)\to C_\rho, \qquad
	u(r,\theta)\to C_u,\qquad
	v(r,\theta)\to C_v,
	\quad\text{as } r\to0.
	\]	
	
	\paragraph{Finite number of surface plasmon modes} 
	A second important difference concerns the qualitative
	structure of the plasmonic spectrum in smooth domains.
	
	If $\partial D$ is of class $C^{1,\alpha}$, with $\alpha>0$, then the Neumann–Poincaré operator $\KK^*$ is compact on $H^{-\frac{1}{2}}(\partial D)$, and therefore its spectrum consists of a sequence of eigenvalues accumulating at \(0\). Consequently, the local plasmonic eigenvalues form a sequence of negative values accumulating at $-1$. The associated plasmonic modes are harmonic and become increasingly localized near the particle's boundary as the mode number increases. In the high–mode limit, these modes approach the surface plasmon polaritons supported by a flat metal–vacuum interface \cite{Schnitzer:2019}.
	
	In contrast, the nonlocal plasmonic eigenvalues can only accumulate at \(1\), implying a finite number of negative eigenvalues, and therefore a finite number of the associated surface-type modes, analogous to the classical plasmonic modes---the remaining of the spectrum of positive eigenvalues correspond to a qualitatively different family of bulk-type. Figure~\ref{fig: eigenvalues} illustrates this phenomenon by showing the negative part of the local and nonlocal spectrum of a sphere, for several values of the nonlocal parameter \(h\). 
	
	\begin{figure}[h]
		\centering
		\begin{tikzpicture}	
			\begin{axis}[
				width=10cm,
				height=6.2cm,
				xmin=0, xmax=60,
				ymin=-2, ymax=0,
				xlabel={$j$},
				ylabel={$\varepsilon_j$},
				grid=both,
				ticklabel style={font=\small},
				label style={font=\small},
				legend style={font=\footnotesize, at={(1,-0.005)}, row sep=-1pt, anchor=south east, draw=none, fill=none},
				line cap=round,
				]
				
				\addplot[color={rgb,255:red,200; green,90; blue,0}, thick, mark=+, mark size=2pt] table[col sep=tab] {eps_nonlocal_1.dat};
				\addlegendentry{$h = 5\times10^{-4}$}
				\addplot[color={rgb,255:red,200; green,90; blue,0}, thick, mark=o, mark size=2pt] table[col sep=tab] {eps_nonlocal_2.dat};
				\addlegendentry{$h = 1\times10^{-2}$}
				\addplot[color={rgb,255:red,200; green,90; blue,0}, thick, mark=diamond, mark size=2pt] table[col sep=tab] {eps_nonlocal_3.dat};
				\addlegendentry{$h = 2\times10^{-2}$}	
				\addplot[color={rgb,255:red,200; green,90; blue,0}, thick, mark = square, mark size=2pt] table[col sep=tab] {eps_nonlocal_4.dat};
				\addlegendentry{$h = 5\times10^{-2}$}
				\addplot[semithick, mark=*, mark size=0.8pt] table[col sep=tab] {eps_local.dat};
				\addlegendentry{local}		
			\end{axis}	
		\end{tikzpicture}
		\caption{Nonlocal plasmonic eigenvalues for a spherical particle compared with the local quasi-static spectrum. The horizontal axis shows the mode index \(j\), while the vertical axis denotes the corresponding plasmonic eigenvalue \(\varepsilon_j\). In the local theory the eigenvalues accumulate at $-1$. In contrast, since $\sigma_{\text{nonlocal}}$ accumulates at \(1\), only a finite number of negative nonlocal plasmonic eigenvalues exist for any fixed \(h\).}
		\label{fig: eigenvalues}
	\end{figure}
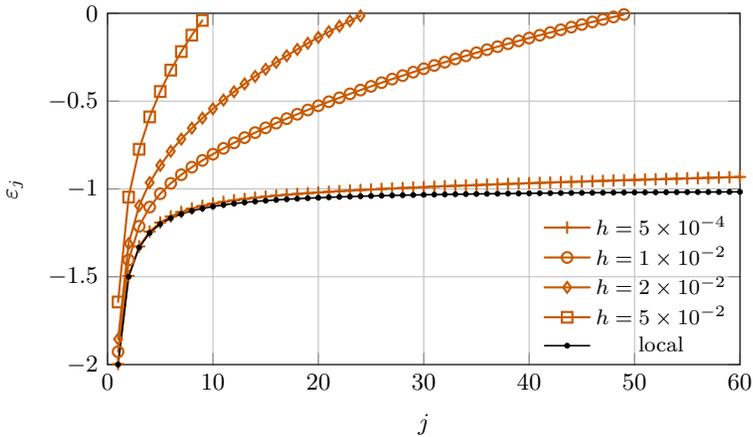

	\subsection{Consistency with known asymptotic results} \label{sec: spectral asymptotics}
	In typical plasmonic regimes one has $h\ll1$, and it is therefore natural to investigate whether, and in what sense, nonlocal plasmonic eigenvalues converge to their local counterparts as $h\to0$. This behaviour is illustrated in Figure~\ref{fig: eigenvalues}, where we observe that the number of negative nonlocal plasmonic eigenvalues increases as $h$ decreases and that these eigenvalues converge non-uniformly to the corresponding local plasmonic eigenvalues. 
	\begin{remark}
		The non-uniformity of this convergence reflects the singularly perturbed nature of the nonlocal model. Indeed, the interior potential $u$ satisfies the fourth-order equation
		\begin{equation*}
			\frac{h^2}{z^2}\Delta^2 u+\Delta u=0
			\qquad\text{in }D,
		\end{equation*}
		which may be viewed as a high-order perturbation of the local second-order problem.
	\end{remark}
	
	First-order asymptotic corrections to the nonlocal plasmonic eigenvalues were derived in \cite{Schnitzer:16b} using matched asymptotic techniques. In this section, we show that our formalism naturally recovers these corrections. As our focus here is on providing a consistency check, we work at a formal level and assume that the corresponding eigenvalues converge as $h\to0$. A rigorous proof of this spectral convergence would likely require tools beyond standard Rouché-type arguments \cite{Ammari_book:09}, and pursuing such a justification would take us beyond the scope of the present paper.
	
	\medskip
	
	Let $\partial D$ be smooth, $z \in \mathbb{C}$ with $\Im z > 0$, $h > 0$, and $\psi \in C^{1}(\partial D)$. Standard asymptotic evaluation of the layer potential integrals shows (see Lemma~\ref{lem: asymptotics S_D^k and K_D^k}) that as \(h\to0\), and for $\psi \in C^{1}(\partial D)$,
	\[
	\SS^{\frac{z}{h}}[\psi](x)
	=
	-\frac{i h}{2z}\,\psi(x)+O(h^2),
	\qquad x\in\partial D,
	\]
	and
	\[
	\KK^{*,\frac{z}{h}}[\psi](x)=O(h),
	\qquad x\in\partial D.
	\]
	This implies that for \(z\) such that \(\Im z>0\) and for any regular density \(\psi\),
	\begin{equation}\label{eq: singular perturbed expansion}
		-2\Lambda_h(z)[\psi]
		=
		\left(z^2I+ \frac{1}{2}I + \mathcal{K}^*\right)[\psi]
		+\frac{i h}{z}\left(-\frac14 I+(\mathcal{K}^*)^2\right)
		(\SS)^{-1}[\psi]
		+O(h^2).
	\end{equation}
	
	\begin{remark}
		The asymptotic expansion \eqref{eq: singular perturbed expansion} shows that the operator \(2\Lambda_h(z)\) converges strongly, as \(h \to 0\), to
		$
		z^2I+ \frac{1}{2}I + \mathcal{K}^*.
		$
		This convergence cannot, however, hold in operator norm. Indeed, norm convergence would imply that \(\SS_1^{\frac{z}{h}} \to \SS\) in norm, which is impossible, since \(\SS\) is invertible whereas \(\SS_1^{\frac{z}{h}}\) is compact for every \(h\). This failure of norm convergence reflects the singularly perturbed nature of the nonlocal model.
	\end{remark}
	
	Up to errors of order \(O(h^2)\), the surface plasmon resonance problem may therefore be reduced to finding non-trivial \(\psi\) and \(z\) such that
	\[
	\Lambda(z)[\psi]=0,
	\]
	where
	\[
	\Lambda(z)
	=
	z^2I+\tfrac12 I+\mathcal{K}^*
	+\frac{ih}{z}\left(-\frac14 I+(\mathcal{K}^*)^2\right)
	(\SS)^{-1}.
	\]
	
	Let \(\psi^{\mathrm{loc}}\) be a local plasmonic eigenmode associated with the local plasmonic eigenvalue \(\varepsilon^{\mathrm{loc}}\); both \(\psi^{\mathrm{loc}}\) and \(\varepsilon^{\mathrm{loc}}\) are real, and \(\varepsilon^{\mathrm{loc}}<0\). Define
	\[
	z^{\mathrm{loc}}
	=
	\sqrt{\frac{\varepsilon^{\mathrm{loc}}}{1-\varepsilon^{\mathrm{loc}}}}.
	\]
	Then
	\[
	\left((z^{\mathrm{loc}})^2I + \frac12 I+\KK^*\right)[\psi^{\mathrm{loc}}]
	= 0.
	\]
	
	Assuming that \((z^{\mathrm{loc}})^2\) is a simple eigenvalue, and assuming the existence of \(z\) near \(z^{\mathrm{loc}}\) and \(\psi\) near \(\psi^{\mathrm{loc}}\) such that \(\Lambda(z)[\psi]=0\), we seek a expansion of the form
	\[
	\psi=\psi^{\mathrm{loc}}+h\psi_1+O(h^2),
	\qquad
	z=z^{\mathrm{loc}}+ah+O(h^2).
	\]
	
	Collecting terms of order \(h\) in the equation \(\Lambda(z)[\psi]=0\) yields
	\begin{equation}\label{eq_int_1}
		\left((z^{\mathrm{loc}})^2I
		+\frac12 I+\mathcal{K}^*\right)[\psi_1]
		+2az^{\mathrm{loc}}\psi^{\mathrm{loc}}
		+i(z^{\mathrm{loc}})^{-1}\left(-\frac14 I+(\mathcal{K}^*)^2\right)
		(\SS)^{-1}[\psi^{\mathrm{loc}}]
		=0.
	\end{equation}
	
	Multiplying \eqref{eq_int_1} by \(\SS[\psi^{\mathrm{loc}}]\), integrating over \(\partial D\), and using the symmetrisation identity \eqref{eq: symmetrisation identity}, we obtain
	\[
	2a z^{\mathrm{loc}}
	\left\langle\SS[\psi^{\mathrm{loc}}]\,,\,\psi^{\mathrm{loc}}\right\rangle_{L^2(\partial D)}
	+
	iz^{\mathrm{loc}}((z^{\mathrm{loc}})^2+1)
	\|\psi^{\mathrm{loc}}\|^2_{L^2(\partial D)}
	=0,
	\]
	which gives
	\begin{equation}\label{eq: a}
		a
		=
		-i\frac{(z^{\mathrm{loc}})^2+1}{2}
		\frac{\|\psi^{\mathrm{loc}}\|^2_{L^2(\partial D)}}	{\left\langle\SS[\psi^{\mathrm{loc}}]\,,\,\psi^{\mathrm{loc}}\right\rangle_{L^2(\partial D)}}.
	\end{equation}
	
	Using the relation
	\(
	\varepsilon=1-\frac{1}{z^2+1},
	\)
	we recover the first-order asymptotic correction to the plasmonic eigenvalue \cite{Schnitzer:16b}:
	\begin{align*}
		\varepsilon
		&=
		1-\frac{1}{(z^{\mathrm{loc}})^2+1+2haz^{\mathrm{loc}}+O(h^2)}\\
		&=
		\varepsilon^{\mathrm{loc}}
		-
		h\sqrt{\varepsilon^{\mathrm{loc}}(\varepsilon^{\mathrm{loc}}-1)}
		\frac{\|\psi^{\mathrm{loc}}\|^2_{L^2(\partial D)}}	{\left\langle\SS[\psi^{\mathrm{loc}}]\,,\,\psi^{\mathrm{loc}}\right\rangle_{L^2(\partial D)}}
		+O(h^2).
	\end{align*}
	
	\section{Resonance excitation}\label{sec: modal expansion}
	\subsection{Modal expansion}
	We now return to the operator $\tilde{\Lambda}_h(z)$, defined in \eqref{eq: tilde{Lambda}_h(z)}. From Theorem~\ref{thm: main result} we know that $\tilde{\Lambda}_h(z)$ is an analytic family of Fredholm operators of index zero that is invertible for at least one $z\in\mathbb{C}$. Hence, by the analytic Fredholm theorem, the map \[ z \mapsto \tilde{\Lambda}^{-1}_h(z) \] is a meromorphic operator-valued function on $\mathbb{C}$ with a discrete set of isolated and finite order poles that we denote by
	$
	\Omega\subset\mathbb{C}.
	$
	Note that $\Omega$ contains poles associated with both nonlocal plasmonic resonances and scattering resonances associated with the single layer potential. 
	
		Assuming that the poles are simple, the Laurent expansion of $\tilde{\Lambda}^{-1}_h(z)$ in \(z\in B_j\setminus\{z_j\}\) takes the form \[ 
	\tilde{\Lambda}^{-1}_h(z) = \frac{\mathcal{P}_j}{z - z_j} + \mathcal{H}_j(z), 
	\] where $\mathcal{H}_j(z)$  is a \(z_j\)-dependent holomorphic operator-valued function in an open neighbourhood \(B_j\) of \(z_j\) such that $B_j\cap\Omega=\{z_j\}$. $\mathcal{P}_j$ is a finite-rank operator (the residue at $z_j$) that defines the resonant (modal) subspace associated with the resonance $z_j$. In particular, for a given $\varphi\in H^{-\frac12}(\partial D)$ the residue projector takes the form
	\[ 
	\mathcal{P}_{j} \varphi = \sum_{\psi_j\in \ker \tilde{\Lambda}_h(z_j)} \frac{ \langle \psi_j^*, \varphi\rangle_{L^2(\partial D)} }{ \left\langle \psi_j^*, \frac{d\tilde{\Lambda}_h}{dz}(z_j)\psi_j\right\rangle_{L^2(\partial D)} } \,\psi_j, 
	\]
	where the function $\psi_j^*\in \ker \tilde{\Lambda}_h^*(z_j),$ can be picked uniquely by imposing usual bi-orthogonality relations with the functions in $\ker \tilde{\Lambda}_h(z_j)$.
	\begin{remark}
	For a simple pole we have that for each 
		$ 
		\psi_j\in \ker \tilde{\Lambda}_h(z_j) 
		$ 
		it holds 
		$ 
		\left\langle \psi_j^*\,,\, \frac{d\tilde{\Lambda}_h}{dz}(z_j)\psi_j\right\rangle_{L^2(\partial D)} \neq 0.
		$ 
	\end{remark}
	\begin{remark} 
		In the case of a sphere, the operator $\tilde{\Lambda}_h(z)$ (and $\Lambda_h(z)$) can be diagonalised in the basis of spherical harmonic functions. One can thereby show that all poles in the case of a sphere are simple. We show these calculations in Appendix~\ref{sec: Appendix sphere}.
	\end{remark} 

	Consequently, the solution 
	\[ 
	\psi(z) = \tilde{\Lambda}^{-1}_h(z)\left[\frac{\partial v^{\text{ext}}}{\partial \nu}\right] 
	\]
	admits the following local meromorphic expansion for \(z\in B_j\setminus\{z_j\}\):
	\begin{equation} \label{eq: mod expansion psi}
		\psi(z) = \frac{1}{z - z_j}\sum_{\psi_j\in \ker \tilde{\Lambda}_h(z_j)} \frac{ \langle \psi_j^*, \frac{\partial v^{\text{ext}}}{\partial \nu}\rangle_{L^2(\partial D)} }{ \left\langle \psi_j^*, \frac{d\tilde{\Lambda}_h}{dz}(z_j)\psi_j\right\rangle_{L^2(\partial D)} } \,\psi_j + \mathcal{H}_j(z)\left[\frac{\partial v^{\text{ext}}}{\partial \nu}\right]. 
	\end{equation} 
	This leads to the following modal expansion result for the exterior potential $v$. (similar meromorphic expansions hold for the bulk fields $\rho$ and $u$).
	\begin{theorem}\label{thm: modal expansion} Let \(\Sigma \subset \Omega\) be a finite set of poles of $\tilde{\Lambda}^{-1}_h(z)$ that are assumed to be simple. For $z$ in a neighbourhood of $\Sigma$, the exterior potential $v$, solution to the nonlocal system \eqref{eq:nonlocal scattering problem}, admits the following modal expansion form,
		\begin{align*}
			v(z) = \sum_{z_j\in\Sigma} \sum_{\psi_j\in \ker \tilde{\Lambda}_h(z_j)}  \frac{c_j}{z-z_j}v_j + \mathcal{V}_{\Sigma}(z)\left[\frac{\partial v^{\text{ext}}}{\partial \nu}\right] + v^{\text{ext}} ,
		\end{align*} 
		where $\mathcal{V}_{\Sigma}(z)$ is a holomorphic operator-valued map, $v_j$ is a nonlocal plasmonic mode exterior potential (satisfying the nonlocal plasmonic eigenvalue problem \eqref{eq:nonlocal plasmonic eigenvalue problem}), and 
		\[ 
		c_j = \frac{ \langle \psi_j^*, \frac{\partial v^{\text{ext}}}{\partial \nu}\rangle_{L^2(\partial D)} }{ \left\langle \psi_j^*, \frac{d\tilde{\Lambda}_h}{dz}(z_j)\psi_j\right\rangle_{L^2(\partial D)}},
		\]
		with $\psi_j^*\in  \ker \tilde{\Lambda}^*_h(z_j)$ bi-orthogonal to $\psi_j$.
		
	\end{theorem}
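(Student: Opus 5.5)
The idea is to push the local meromorphic expansion \eqref{eq: mod expansion psi} of the density $\psi(z)=\tilde{\Lambda}_h^{-1}(z)[\partial v^{\text{ext}}/\partial\nu]$ through the linear, $z$-analytic map that produces $v$ from $\psi$ in Proposition~\ref{prop1}. By that proposition, $v=\SS[q]+v^{\text{ext}}$, where $q$ solves \eqref{eq: q(psi)}:
\[
\left(\tfrac12 I+\KK^*\right)q=-h\frac{\partial\rho}{\partial\nu}-\frac{\partial v^{\text{ext}}}{\partial\nu},
\qquad
\rho=\SS^{\frac{z}{h}}[\zeta],\quad \zeta=\psi/h .
\]
The spectrum of $\KK^*$ lies in $(-1/2,1/2]$, so $\frac12 I+\KK^*$ is invertible on $H^{-\frac12}(\partial D)$. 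Using the jump relation \eqref{jump1}, we therefore get
\[
v(z)-v^{\text{ext}}=\mathcal{T}(z)[\psi(z)]+\mathcal{R}\Big[\frac{\partial v^{\text{ext}}}{\partial\nu}\Big],
\]
\[
\mathcal{T}(z):=-\SS\left(\tfrac12 I+\KK^*\right)^{-1}\left(-\tfrac12 I+\KK^{*,\frac zh}\right),
\qquad
\mathcal{R}:=-\SS\left(\tfrac12 I+\KK^*\right)^{-1}.
\]
Here $\mathcal{R}$ does not depend on $z$. Since $\KK^{*,\frac zh}=\KK^*+\frac{z^2}{h^2}\KK_1^{*,\frac zh}$, Lemma~\ref{lem: S_D1 and K_D1} shows that $z\mapsto\mathcal{T}(z)$ is an entire operator-valued map from $H^{-\frac12}(\partial D)$ into $H^1_{\mathrm{loc}}(\mathbb{R}^3\setminus\overline D)$.

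Next I insert the Laurent expansions. Fix a neighbourhood $U$ of $\Sigma$ that is a union of small discs $B_j$ with $B_j\cap\Omega=\{z_j\}$. On $B_j\setminus\{z_j\}$ write $\tilde{\Lambda}_h^{-1}(z)=\mathcal{P}_j/(z-z_j)+\mathcal{H}_j(z)$, and expand $\mathcal{T}(z)=\mathcal{T}(z_j)+(z-z_j)\mathcal{T}_j'(z)$ with $\mathcal{T}_j'$ holomorphic. Then
\[
\mathcal{T}(z)\tilde{\Lambda}_h^{-1}(z)=\frac{\mathcal{T}(z_j)\mathcal{P}_j}{z-z_j}+\mathcal{T}_j'(z)\mathcal{P}_j+\mathcal{T}(z)\mathcal{H}_j(z).
\]
The last two terms are holomorphic on $B_j$. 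Using the formula for $\mathcal{P}_j$, the singular part equals $\sum_{\psi_j}c_j\,v_j/(z-z_j)$ with $v_j:=\mathcal{T}(z_j)[\psi_j]$. To obtain a single $\mathcal{V}_\Sigma$, I subtract the finitely many polar parts from $\mathcal{T}(z)\tilde{\Lambda}_h^{-1}(z)+\mathcal{R}$ and call the remainder $\mathcal{V}_\Sigma(z)$. This remainder has removable singularities at every $z_j\in\Sigma$. It is holomorphic on $U$, and also on $\mathbb{C}\setminus\Omega$, since the poles in $\Sigma$ contribute terms that are analytic away from $\Sigma$. This gives the stated formula.

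It remains to identify $v_j$ as a nonlocal plasmonic mode; I expect this to be the main obstacle. Since $\tilde{\Lambda}_h(z_j)\psi_j=0$, Proposition~\ref{prop2} gives $\Lambda_h(z_j)\psi_j=0$ (taking $z_j\neq0$). I then apply the first part of Proposition~\ref{prop1} with $v^{\text{ext}}=0$. It shows that $\rho_j=\SS^{z_j/h}[\psi_j/h]$, $u_j$ and $v_j=\SS[q_j]$, with $p_j,q_j$ from \eqref{eq: p(psi)}–\eqref{eq: q(psi)}, solve \eqref{eq:nonlocal plasmonic eigenvalue problem}. Decay at infinity holds because $\SS[q_j]=O(|x|^{-1})$, and the $H^1$ regularity follows from the mapping properties of the layer potentials, as in Theorem~\ref{thm: main result}.

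The subtle point is nontriviality, which requires that $z_j$ correspond to a plasmonic resonance rather than a scattering resonance of $\SS^{z_j/h}$. If $\SS^{z_j/h}[\psi_j]=0$ on $\partial D$ and also $(-\frac12I+\KK^{*,z_j/h})\psi_j=0$, then $\rho_j=0$ and hence $v_j=0$. In that case the term can be absorbed into $\mathcal{V}_\Sigma$, or $\Sigma$ is taken inside the set corresponding to $\sigma_{\mathrm{nonlocal}}$, where Definition~\ref{def: nonlocal resonant set} guarantees injectivity. I would add this as a standing interpretation. The coefficients $c_j$ come directly from the bi-orthogonal representation of $\mathcal{P}_j$, which the simple-pole assumption justifies.
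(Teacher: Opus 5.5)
Your proposal is correct and follows the paper's proof essentially step for step. You write $v=\SS[q]+v^{\mathrm{ext}}$ with $q$ obtained from \eqref{eq: q(psi)} by inverting $\frac12 I+\KK^*$, use the analyticity of $z\mapsto\KK^{*,z/h}$ to split the image of the Laurent expansion on each $B_j$ into the polar part $\mathcal{T}(z_j)\mathcal{P}_j/(z-z_j)$ plus a holomorphic remainder, and subtract the finitely many principal parts to define $\mathcal{V}_\Sigma$ on $\bigcup_j B_j$.

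The only difference is that you explicitly check, via Propositions~\ref{prop1} and~\ref{prop2}, that $v_j=\mathcal{T}(z_j)[\psi_j]$ is the exterior part of a solution of \eqref{eq:nonlocal plasmonic eigenvalue problem}, whereas the paper simply takes this formula as the definition of the mode. Note that $v_j$ can vanish even at genuine nonlocal eigenvalues, for example the $\ell=0$ bulk modes of the sphere. This is harmless for the expansion, so no extra standing assumption is needed.
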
 
	\begin{proof} From the layer potential representation established in Section~\ref{sec: layer potential formulation}, we have
		\[
		v(z)=\SS[q(z)]+v^{\mathrm{ext}},
		\]
		where
		\[
		q(z)
		=
		-\left(\frac12 I+\KK^*\right)^{-1}
		\left(-\frac12 I+\KK^{*,\frac zh}\right)[\psi(z)]
		-
		\left(\frac12 I+\KK^*\right)^{-1}\left[\frac{\partial v^{\mathrm{ext}}}{\partial \nu}\right].
		\]
		Choose an open neighbourhood \(B_j\) of \(z_j\) such that
		$
			B_j\cap \Omega=\{z_j\}.
		$
		Since \(z\mapsto \KK^{*,z/h}\) is analytic, it follows that
		\[
		z\mapsto \left(-\frac12 I+\KK^{*,\frac zh}\right)
		\]
		is analytic on \(B_j\). Therefore, using \eqref{eq: mod expansion psi} to expand \(\psi(z)\), we obtain, for \(z\in B_j\setminus\{z_j\}\),
		\begin{align*}
			\left(-\frac12 I+\KK^{*,\frac zh}\right)[\psi(z)]
			&=
			\frac{1}{z-z_j}
			\sum_{\psi_j\in \ker \tilde{\Lambda}_h(z_j)}
			\frac{\left\langle \psi_j^*, \frac{\partial v^{\mathrm{ext}}}{\partial \nu}\right\rangle_{L^2(\partial D)}}
			{\left\langle \psi_j^*, \frac{d\tilde{\Lambda}_h}{dz}(z_j)\psi_j\right\rangle_{L^2(\partial D)}}
			\left(-\frac12 I+\KK^{*,\frac{z_j}{h}}\right)[\psi_j] \\
			&\qquad
			+\mathcal{W}_j(z)\left[\frac{\partial v^{\mathrm{ext}}}{\partial \nu}\right],
		\end{align*}
		where \(\mathcal{W}_j(z)\) is holomorphic on \(B_j\). Substituting this into the expression for \(q(z)\), we obtain
		\begin{align*}
			q(z)
			&=
			-\frac{1}{z-z_j}
			\sum_{\psi_j\in \ker \tilde{\Lambda}_h(z_j)}
			\frac{\left\langle \psi_j^*, \frac{\partial v^{\mathrm{ext}}}{\partial \nu}\right\rangle_{L^2(\partial D)}}
			{\left\langle \psi_j^*, \frac{d\tilde{\Lambda}_h}{dz}(z_j)\psi_j\right\rangle_{L^2(\partial D)}}
			\left(\frac12 I+\KK^*\right)^{-1}
			\left(-\frac12 I+\KK^{*,\frac{z_j}{h}}\right)[\psi_j] \\
			&\qquad
			+\mathcal{Q}_j(z)\left[\frac{\partial v^{\mathrm{ext}}}{\partial \nu}\right],
		\end{align*}
		for some holomorphic operator-valued function \(\mathcal{Q}_j(z)\) on \(B_j\). Applying \(\SS\) and recalling the definition of the nonlocal plasmonic mode \(v_j\),
		\[
		v_j
		=
		-\SS\left(\frac12 I+\KK^*\right)^{-1}
		\left(-\frac12 I+\KK^{*,\frac{z_j}{h}}\right)[\psi_j],
		\]
		we arrive at the local representation
		\[
		v(z)
		=
		\frac{1}{z-z_j}
		\sum_{\psi_j\in \ker \tilde{\Lambda}_h(z_j)}
		\frac{\left\langle \psi_j^*, \frac{\partial v^{\mathrm{ext}}}{\partial \nu}\right\rangle_{L^2(\partial D)}}
		{\left\langle \psi_j^*, \frac{d\tilde{\Lambda}_h}{dz}(z_j)\psi_j\right\rangle_{L^2(\partial D)}}
		\,v_j
		+
		\mathcal{V}_j(z)\left[\frac{\partial v^{\mathrm{ext}}}{\partial \nu}\right]
		+
		v^{\mathrm{ext}},
		\]
		for $z\in B_j\setminus\{z_j\}$, where \(\mathcal{V}_j(z)\) is holomorphic on \(B_j\).
		
 By the local representation above, \(v(z)\) admits, in each \(B_j\), a decomposition into a principal part at \(z_j\) and a holomorphic remainder. Subtracting the sum of all principal parts over \(\Sigma\) therefore removes all singularities of \(v(z)\) in \(U := \bigcup_{z_j \in \Sigma} B_j\). It follows that the remainder extends to a holomorphic operator-valued function on \(U\), which we denote by \(\mathcal{V}_\Sigma(z)\). This yields the claimed expansion.
	\end{proof}
	
	\medskip
	
	When applying this theorem we must take into account that 
	$$
	z(\omega)=\sqrt{\frac{\varepsilon(\omega)}{1-\varepsilon(\omega)}},
	$$ 
	where $\varepsilon(\omega)$ is determined by the Drude model \eqref{eq:Drude}. This implies that $\Im(z^{2}(\omega))>0$ and, therefore, $z(\omega)$ must lie in the first quadrant. On the other hand, poles arising from $\sigma_{\text{nonlocal}}$ lie either on the positive imaginary axis (corresponding to surface modes with $\varepsilon<0$) or on the positive real axis (corresponding to bulk modes with $\varepsilon>0$), whereas scattering resonances lie in the lower half-plane. Consequently, in physically relevant regimes, only the nonlocal poles can resonate within this modal representation. Moreover, even if $z$ were to approach a scattering resonance, such poles correspond to fields that vanish in the interior domain and therefore do not contribute to the modal representation of the bulk quantities $\rho$, $u$, and $v$.
	
	We emphasise that our derivation relies only on the Fredholm analyticity of $\tilde{\Lambda}_h(z)$. 
	Therefore, contrary to the local case, the modal expansion remains valid for any Lipschitz domain, thus yielding a meaningful resonant modal expansion in regimes where the local quasi-static theory breaks down. In particular, even in the presence of corners or conical points, one obtains well-defined nonlocal plasmonic modes, which is consistent with previous numerical investigations \cite{wiener:2012}. 
	
	\subsection{Far field excitation}
	We now consider the particular case where the nanoparticle is excited by a far-field source, so that $f = 0$ in \eqref{eq:nonlocal scattering problem} and $g$ is such that 
	$
	v^{\mathrm{ext}}(x) = \hat{\imath}\cdot x.
	$
	This setting corresponds to an incoming plane wave in the quasi-static approximation. Such excitation typically couples to a finite (small) number of surface modes. 
	In this context, combining the modal expansion of Theorem~\ref{thm: modal expansion} with the perturbation formula of nonlocal eigenvalues discussed in Section~\ref{sec: spectral asymptotics}, yields the following result, pertinent to far-field excitations.
	\begin{proposition}\label{prop: resonance shift}
		Assume that only finitely many modes in a set $\Sigma$ are excited, that these correspond to simple one-dimensional poles, and that the associated nonlocal resonances converge to the corresponding local resonances as $h\to 0$. Then, as $h\to 0$,
		\[
		v(z) =
		\sum_{z_j\in\Sigma}
		\frac{c_j^{\text{loc}}}{z - (z_j^{\text{loc}} + a_j h) + O(h^2)}\, v_j^{\text{loc}}
		+
		\mathcal{V}_{\Sigma}(z)
		\!\left[
		\frac{\partial v^{\mathrm{ext}}}{\partial \nu}
		\right]
		+
		v^{\mathrm{ext}}
		+
		O(h),
		\]
		where
		\[
		c_j^{\text{loc}}
		=
		\frac{
			\left\langle (\psi_j^{\text{loc}})^*,
			\frac{\partial v^{\mathrm{ext}}}{\partial \nu}
			\right\rangle_{L^2(\partial D)}
		}{
			2\left\langle (\psi_j^{\text{loc}})^*, \psi_j^{\text{loc}} \right\rangle_{L^2(\partial D)}
		},
		\]
		and
		\[
		a_j
		=
		-i\frac{(z_j^{\text{loc}})^2+1}{2}
		\frac{\|\psi^{\mathrm{loc}}\|^2_{L^2(\partial D)}}	{\left\langle\SS[\psi^{\mathrm{loc}}]\,,\,\psi^{\mathrm{loc}}\right\rangle_{L^2(\partial D)}}.
		\]
	\end{proposition}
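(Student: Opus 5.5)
The plan is to start from the modal expansion of Theorem~\ref{thm: modal expansion}, applied to the finite set $\Sigma$ of excited simple, one-dimensional poles, and to expand each ingredient $z_j$, $c_j$, $v_j$ to leading order in $h$. Since each pole is simple with a one-dimensional kernel, the inner sum over $\ker\tilde{\Lambda}_h(z_j)$ has a single term. The expansion then reads
\[
v(z)=\sum_{z_j\in\Sigma}\frac{c_j}{z-z_j}v_j+\mathcal{V}_\Sigma(z)\Big[\tfrac{\partial v^{\mathrm{ext}}}{\partial\nu}\Big]+v^{\mathrm{ext}}.
\]
The task therefore reduces to three substitutions:
\[
z_j=z_j^{\mathrm{loc}}+a_jh+O(h^2),\qquad c_j=c_j^{\mathrm{loc}}+O(h),\qquad v_j=v_j^{\mathrm{loc}}+O(h).
\]
The $O(h)$ errors in $c_j$ and $v_j$ are then absorbed into the additive $O(h)$ remainder. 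This absorption is understood for $z$ in a fixed neighbourhood where $1/(z-z_j)$ is controlled, as the statement is formal in the same sense as Section~\ref{sec: spectral asymptotics}.

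\textbf{Step 1: pole location.} The pole location follows from the assumed convergence of nonlocal to local resonances. I would invoke the formal perturbation computation of Section~\ref{sec: spectral asymptotics}. Specifically, the expansion \eqref{eq: singular perturbed expansion} reduces $\Lambda_h(z)\psi=0$ to $\Lambda(z)\psi=0$ up to $O(h^2)$. The solvability condition, obtained by pairing with $\SS[\psi^{\mathrm{loc}}]$, gives $z_j=z_j^{\mathrm{loc}}+a_jh+O(h^2)$ with $a_j$ as in \eqref{eq: a}. Since $\tilde\Lambda_h=z^{-2}\Lambda_h$, the kernels of the two operators coincide, so this applies verbatim. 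The same computation gives $\psi_j=\psi_j^{\mathrm{loc}}+O(h)$.

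\textbf{Step 2: residue coefficient.} I would use $\tilde\Lambda_h(z)=z^{-2}\Lambda_h(z)$ together with $-2\Lambda_h(z)=z^2I+\tfrac12I+\KK^*+O(h)$. This gives, at leading order,
\[
\tilde\Lambda_h(z)\approx-\tfrac12 z^{-2}\big(z^2I+\tfrac12I+\KK^*\big).
\]
Differentiating in $z$ and evaluating on $\psi_j^{\mathrm{loc}}$, the term coming from differentiating the prefactor $z^{-2}$ multiplies $(z^2+\tfrac12+\KK^*)\psi_j^{\mathrm{loc}}=0$ and drops out. What remains is
\[
\frac{d\tilde\Lambda_h}{dz}(z_j)\psi_j=-\frac{1}{z_j^{\mathrm{loc}}}\,\psi_j^{\mathrm{loc}}+O(h).
\]
The adjoint kernel element $\psi_j^*$ likewise converges to $(\psi_j^{\mathrm{loc}})^*$, which can be taken as $\SS[\psi_j^{\mathrm{loc}}]$ via the symmetrisation identity \eqref{eq: symmetrisation identity}. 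Inserting this into the formula for $c_j$ yields $c_j^{\mathrm{loc}}$ up to the scalar normalisation factor. I expect this factor to be matched to the stated $2\langle(\psi_j^{\mathrm{loc}})^*,\psi_j^{\mathrm{loc}}\rangle$ by the chosen scaling of $\psi_j$ (the factor $-2$ and the power of $z$ can be absorbed into the normalisation of the mode $v_j^{\mathrm{loc}}$). Tracking these constants consistently is the bookkeeping part of this step.

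\textbf{Step 3: mode.} For the mode itself, I would use the formula from the proof of Theorem~\ref{thm: modal expansion},
\[
v_j=-\SS\big(\tfrac12I+\KK^*\big)^{-1}\big(-\tfrac12I+\KK^{*,z_j/h}\big)\psi_j.
\]
By Lemma~\ref{lem: asymptotics S_D^k and K_D^k}, $\KK^{*,z/h}\psi=O(h)$, so $v_j=\tfrac12\SS(\tfrac12I+\KK^*)^{-1}\psi_j^{\mathrm{loc}}+O(h)$. This is a local plasmonic exterior potential, namely $\SS[\psi_j^{\mathrm{loc}}]$ up to a scalar, because $\psi_j^{\mathrm{loc}}$ is an eigenfunction of $\KK^*$.

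\textbf{Main obstacle.} The main difficulty is justification rather than algebra. The expansion \eqref{eq: singular perturbed expansion} holds only strongly, on regular densities, and not in operator norm, as noted in the Remark there. Consequently the convergence of $\psi_j^*$ and of the derivative pairing cannot be obtained from norm perturbation theory. I would handle this, as the paper does, by working at the formal level under the stated convergence hypothesis and by applying the asymptotics only to the smooth functions $\psi_j^{\mathrm{loc}}$ and $\SS[\psi_j^{\mathrm{loc}}]$. Uniformity of the remainder $\mathcal{V}_\Sigma$ in $h$ would likewise be taken as part of the assumption.
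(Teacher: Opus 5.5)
Your proposal is the paper's own argument: the paper only says that combining Theorem~\ref{thm: modal expansion}, applied to the finite set $\Sigma$, with the formal pole shift of Section~\ref{sec: spectral asymptotics} yields the result, and you do exactly this while replacing $c_j$ and $v_j$ by their local limits up to $O(h)$. Your extra bookkeeping (the leading-order pairing $\frac{d\tilde{\Lambda}_h}{dz}(z_j)\psi_j\approx-\psi_j^{\mathrm{loc}}/z_j^{\mathrm{loc}}$, the adjoint element $\SS[\psi_j^{\mathrm{loc}}]$, the limit $v_j\approx-\frac{1}{2(z_j^{\mathrm{loc}})^2}\SS[\psi_j^{\mathrm{loc}}]$, and the remark that the constant in $c_j^{\mathrm{loc}}$ only has meaning together with the unspecified normalisation of $v_j^{\mathrm{loc}}$) is consistent and more explicit than anything in the paper, though one ingredient you take by citation, as the paper does, deserves caution: substituting $\SS^{z/h}[\psi]\approx-\frac{ih}{2z}\psi$ into $\Lambda_h(z)$ gives the $h$-term of \eqref{eq: singular perturbed expansion} with sign $-\frac{ih}{z}$ rather than $+\frac{ih}{z}$, so the first-order shift is actually the negative of \eqref{eq: a} (on the unit sphere the exact dispersion relation gives $a=-\frac{i\ell}{2}$ while \eqref{eq: a} gives $+\frac{i\ell}{2}$), even though the paper's final first-order formula for $\varepsilon$ has the correct blue-shift sign.
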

	Proposition~\ref{prop: resonance shift} gives a modal expansion for the scattered field that connects the well-known blue shift for nonlocal plasmonic resonances with spectral shifts that can be estimated, in general geometries, using perturbation formulas first derived in \cite{Schnitzer:16b} and recovered in Section~\ref{sec: spectral asymptotics} via the layer potential framework. The resonance shift can be appreciated in standard optical observables such as the absorption cross-section \cite{Bohren:Book}, which is extracted from the dipolar far-field asymptotic of the scattered field:
	\[
	\mathrm{Absorption}
	:=
	\frac{\omega}{\omega_p}\,\Im \mu,
	\]
	where the dipole moment coefficient $\mu$ is determined from the expansion
	\[
	\mu \frac{\hat{\imath}\cdot x}{|x|^3}
	\sim
	v(x) - v^{\mathrm{ext}}(x),
	\qquad |x|\to\infty.
	\]
	
	In the case of a sphere, $\mu$ can be computed explicitly \cite{Schnitzer:16b} and may also be recovered directly from the calculations in Appendix~\ref{sec: Appendix sphere}. 
	The resulting absorption spectrum for a sphere is shown in Figure~\ref{fig: far field sphere} where one can observe the  blue shift of the dipolar resonance relative to the local quasi-static model.
	\begin{figure}[h]
		\centering
		\begin{tikzpicture}	
			\begin{semilogyaxis}[
				width=10cm,
				height=6cm,
				xmin=0.3, xmax=1.5,
				ymin=0, ymax=100,
				xlabel={$\omega/\omega_p$},
				ylabel={Absorption},
				grid=both,
				ticklabel style={font=\small},
				label style={font=\small},
				legend style={font=\small, at={(1,0.98)}, anchor=north east, draw=none, fill=none},
				line cap=round,
				]
				
				\addplot[color={rgb,255:red,200; green,90; blue,0}, very thick] table[col sep=tab] {ffext_nonlocal.dat};
				\addlegendentry{nonlocal}
				
				\addplot[semithick] table[col sep=tab] {ffext_local.dat};
				\addlegendentry{local}
				
			\end{semilogyaxis}
			
		\end{tikzpicture}
		\caption{Absorption spectrum for a spherical nanoparticle under far-field excitation. We use $h =  2\times10^{-2}$ for the nonlocal parameter and $\gamma/\omega_p = 1\times 10^{-1}$ for the Drude model. The nonlocal model exhibits the characteristic blue shift of the dipolar plasmon resonance relative to the local quasi-static model (black curve). Additional weaker peaks above the plasma frequency correspond to longitudinal bulk modes supported by the nonlocal model.}
		\label{fig: far field sphere}
	\end{figure}
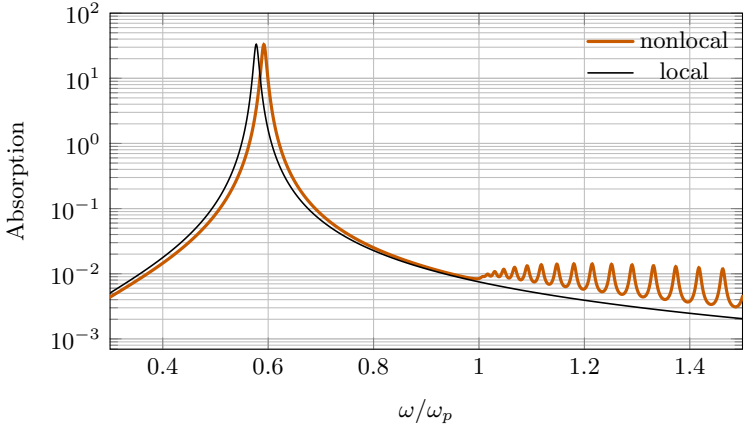
	
	One can also observe several peaks above the plasma frequency $\omega_p$, which are orders of magnitude weaker than the dominant dipolar surface plasmon. These arise from the coupling of far-field excitation to longitudinal bulk modes via their non-trivial boundary trace interacting with $\frac{\partial v^{\mathrm{ext}}}{\partial \nu}$. This behaviour is heuristically explained by Theorem~\ref{thm: modal expansion}, since the coupling coefficients $c_j$ are surface integrals, whereas bulk modes are interior-localised and therefore weakly coupled to the boundary excitation.
	
	\subsection{Near field excitation}
	\begin{figure}[h]
	\centering
	\begin{tikzpicture}[scale=0.7, baseline=(current bounding box.center)]		
		\shade[ball color = gray!20, opacity = 0.4] (0,0) circle (2cm);
		\draw (0,0) circle (2cm);
		\draw (-2,0) arc (180:360:2 and 0.6);
		\draw[dashed] (2,0) arc (0:180:2 and 0.6);
		
		\node at (4,-0.08) {\textcolor{red}{\Large{$\leftrightarrow$}}};
		
		\draw[|-|, dashed] (2.05,0) -- node[midway, above] {$d$} (4,0);
	\end{tikzpicture}
	\hspace{0.5cm}	
	\begin{tikzpicture}[baseline=(current bounding box.center)]
		\begin{groupplot}[
			group style={
				group size=1 by 3,   
				vertical sep=0cm, 
			},
			width=7.5cm,
			height=4.8cm,
			ymin=0, 
			xmin=0.4, xmax=1,
			xtick align=inside,
			ymode=log,
			grid=both,
			ticklabel style={font=\small},
			label style={font=\small},
			legend style={font=\small, at={(1,0.98)}, anchor=north east, draw=none, fill=none},
			]
			
			\nextgroupplot[
			title = {\footnotesize{$\left|\dfrac{\partial (v -  v^{\text{ext}})}{\partial r}(x_d)\right|$}},
			ymax=1000,
			ylabel={$d/R = 0.1$},
			ytick={1e2, 1e3},
			xticklabels=\empty, 
			]
			\addplot[color={rgb,255:red,200; green,90; blue,0}, very thick] table[col sep=tab] {nfext_nonlocal_1dot1.dat};
			\addlegendentry{nonlocal}
			
			\addplot[semithick] table[col sep=tab] {nfext_local_1dot1.dat};
			\addlegendentry{local}
			
			\nextgroupplot[
			ymax=100,
			ylabel={$d/R = 0.3$},
			ytick={1e0,1e1},
			xticklabels=\empty, 
			]
			\addplot[color={rgb,255:red,200; green,90; blue,0}, very thick] table[col sep=tab] {nfext_nonlocal_1dot3.dat};
			\addlegendentry{nonlocal}
			
			\addplot[semithick] table[col sep=tab] {nfext_local_1dot3.dat};
			\addlegendentry{local}
			
			\nextgroupplot[
			ymax=10,
			xlabel={$\omega/\omega_p$},
			ylabel={$d/R = 0.5$},
			ytick={1e-1,1e0},
			]
			\addplot[color={rgb,255:red,200; green,90; blue,0}, very thick] table[col sep=tab] {nfext_nonlocal_1dot5.dat};
			\addlegendentry{nonlocal}
			
			\addplot[semithick] table[col sep=tab] {nfext_local_1dot5.dat};
			\addlegendentry{local}
			
		\end{groupplot}
	\end{tikzpicture}
	\caption{Near–field excitation of a spherical nanoparticle by a point dipole located at distance $d$ from the surface; we use $h =  2\times10^{-2}$ for the nonlocal parameter and $\gamma/\omega_p = 1\times 10^{-1}$ for the Drude model. The panels show the magnitude of the radial component of the scattered electric field at the dipole position for different relative distances $d/R$, where $R$ is the radius of the sphere. In the local model the response develops a broad collective resonance as $d$ decreases due to the accumulation of surface plasmon modes. In contrast, the nonlocal model exhibits well-separated resonances, reflecting the finite number of surface modes and the absence of spectral clustering.}
	\label{fig: near field sphere}
\end{figure}
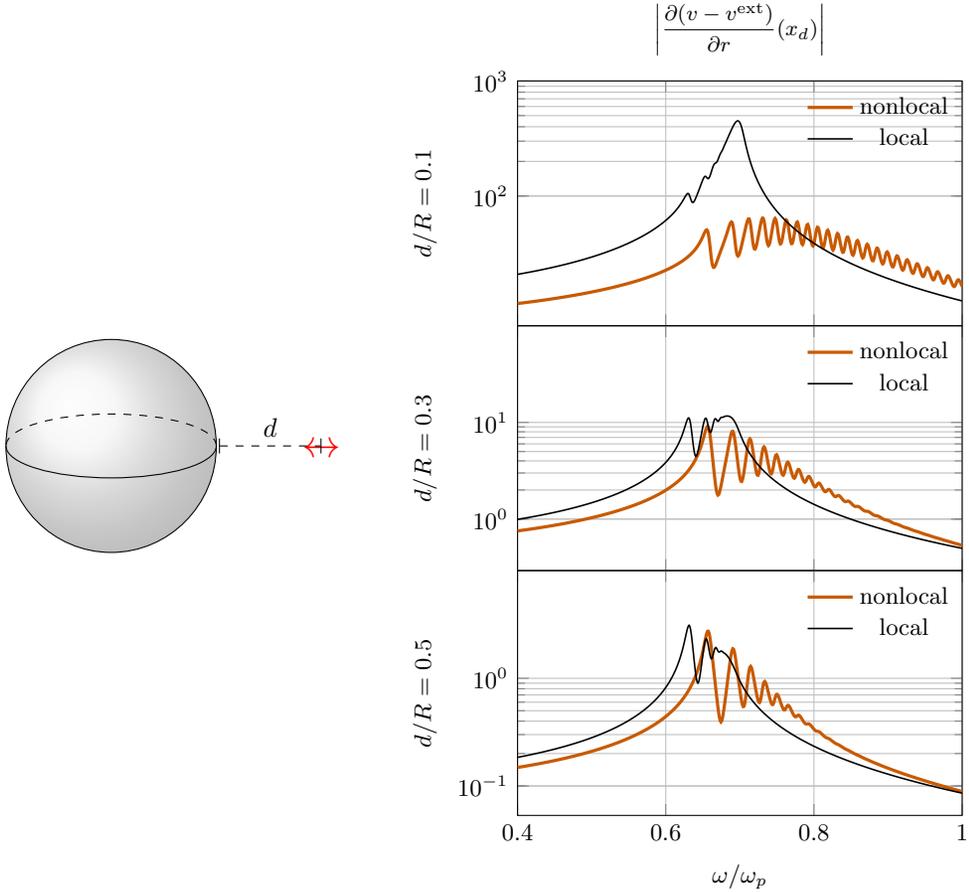
	Finally, we consider the case where the nanoparticle is excited by a near–field source, so that $g = 0$ in \eqref{eq:nonlocal scattering problem}. In particular, we consider the excitation generated by a point dipole located at
	$
	x_d\in \mathbb R^3\setminus \overline D,
	$
	with dipole moment $p$ oriented along the outward normal direction and at a distance $
	d=\mathrm{dist}(x_d,\partial D)
	$
	from the boundary. In the quasi--static regime, this excitation is represented by the dipole-type distribution
	$
	f(x)=p\cdot \nabla_x \delta(x-x_d),
	$
	where \(\delta\) is the Dirac distribution. The resulting external potential is then given by
	\[
	v^{\mathrm{ext}}(x)=p\cdot \nabla_x \Gamma^0(x-x_d).
	\]
	This type of excitation produces boundary data
	$
	\frac{\partial v^{\rm ext}}{\partial \nu}
	$
	that is strongly localised near the closest point to $\partial D$. As $d\to 0$, this forcing becomes increasingly singular and therefore contains progressively higher spatial oscillations along the surface.
	
	A typical quantity of interest for a dipole excitation is the radial component of the scattered electric field at the dipole position. In the local quasi–static case one has the modal representation
	\[
	\dfrac{\partial (v -  v^{\text{ext}})}{\partial r}(x_d)
	=
	(\varepsilon(\omega)-1)\sum_{j=1}^{\infty} 
	\frac{(\varepsilon^{\text{loc}}_j-1)2c_j^{\text{loc}}(d)}{\varepsilon(\omega)-\varepsilon^{\text{loc}}_j}
	\frac{\partial v_j^{\text{loc}}}{\partial r}(x_d),
	\]
	where $\varepsilon_j^{\text{loc}}$ denote the local plasmonic eigenvalues and $v_j^{\text{loc}}$ the corresponding local surface modes. For smooth geometries the sequence $\varepsilon_j^{\text{loc}}$ accumulates at $-1$. As $d\to0$, the coefficients $c_j^{\text{loc}}(d)$ no longer decay rapidly with $j$, and the observable therefore samples a dense set of resonances frequencies for which $\varepsilon(\omega)\approx -1$, producing a pronounced collective resonance effect driven by a high density of states. Instead of isolated peaks, the response develops a broadened and strongly enhanced structure resulting from the coherent contribution of many high-index surface modes. This mechanism, also observed in \cite{Christensen:2014}, is illustrated in the local curves in Figure~\ref{fig: near field sphere}, which serve as a canonical example of this phenomenon, that is general to any smooth geometry.

	In the nonlocal model, the situation is fundamentally different. As first observed numerically in \cite{Christensen:2014}, and as shown in Figure~\ref{fig: near field sphere} for the sphere, multipolar surface resonances remain sharply distinguishable and no clustering occurs, even as $d$ decreases. Our results show that this behaviour is not specific to the sphere but is a general consequence of Theorem~\ref{thm: main result} together with Theorem~\ref{thm: modal expansion}. Indeed, in this case, we may write
	\[
	\dfrac{\partial (v -  v^{\text{ext}})}{\partial r}(x_d)
	=
	\sum_{z_j\in\Sigma}
	\frac{c_j(d)}{z(\omega)-z_j}
	\frac{\partial v_j}{\partial r}(x_d)
	+
	\frac{\partial}{\partial r}
	\mathcal{V}_{\Sigma}(z)
	\Big[
	\frac{\partial v^{\text{ext}}}{\partial \nu}
	\Big](x_d),
	\]
	where $\Sigma$ is any finite set of surface–plasmon characteristic values. By Theorem~\ref{thm: main result}, the number of surface modes is finite in the relevant spectral region, so $\Sigma$ in Theorem~\ref{thm: modal expansion} may be chosen to contain all of them. Hence, even as $x_d\to\partial D$, the response is governed by a finite number of well–defined poles which, in frequency, are distributed throughout the interval below the plasma frequency, with no accumulation phenomenon. 
	
	\section{Concluding remarks} \label{sec: conclusion}
	In this work, we have placed the modal analysis of nonlocal plasmonics on a rigorous mathematical foundation. By recasting the quasi-static hydrodynamic Drude model as a boundary integral operator pencil and invoking analytic Fredholm theory, we obtained a convenient spectral framework for the definition and analysis of nonlocal plasmonic eigenvalues and modes. To the best of our knowledge, this is the first fully rigorous spectral characterisation of plasmonic resonances in the presence of spatial dispersion.
	
	A central outcome of our analysis is that hydrodynamic nonlocality acts as a spectral regularisation mechanism, restoring a well-defined modal structure even in non-smooth geometries. In contrast to the local problem, where the Neumann–Poincaré operator loses compactness on Lipschitz domains, the nonlocal formulation always yields an analytic Fredholm operator pencil with a discrete spectrum. As a result, only finitely many surface modes persist, and the energy-divergent behaviour associated with singular geometries in the quasi-static limit is eliminated.
	
	From an operator-theoretic perspective, this regularisation stems from the higher-order nature of the model, which introduces additional boundary conditions and acts as a singular perturbation of the underlying second-order problem. This singularly perturbed structure is directly encoded in the boundary integral formulation, which yields a family of operators depending on the nonlocal parameter \(h\) and which, in smooth domains, converge strongly---but not in norm---to their local counterparts as \(h \to 0\). 
	
	A related technical point concerns the choice of the spectral parameter. In the present formulation, the analytic parameter is $z$, rather than the more natural $z^2$; this choice reflects the use of the outgoing Green function for the longitudinal fields. While a formulation purely in terms of $z^2$ can, in principle, be obtained via symmetrisation of the Green function, such an approach introduces an additional family of artificial resonances collapsing onto the real axis, which are difficult to distinguish from genuine nonlocal eigenvalues. By contrast, the outgoing formulation effectively shifts these non-physical resonances into the complex plane, yielding a clearer spectral separation and ensuring that the physical poles remain distinct and identifiable.
	
	One might alternatively pursue a variational formulation of the nonlocal system \eqref{eq:nonlocal scattering problem} in terms of bulk quantities, which could potentially yield a simpler linear pencil in the variable \(z^2\). Such a formulation may provide complementary insights into the spectral structure, for instance regarding the completeness of the nonlocal modes and the infinitude of the associated eigenvalues. However, the boundary integral formulation adopted here offers several decisive advantages: it reduces the problem to a single operator equation on the interface, which is naturally suited for efficient numerical discretisation. Moreover, the associated operators admit explicit symbolic representations, providing a concrete analytical framework for further rigorous analysis, including spectral convergence as \(h \to 0\) and the analysis of field saturation in singular geometries.
	
	The presence of spectral pathologies in local models has long been associated with singular field behaviour of direct relevance to applications. This connection has motivated substantial mathematical effort devoted to understanding the emergence of continuous spectrum and singular phenomena in second-order elliptic problems on non-smooth domains, including the appearance of essential spectrum and corner-induced singularities \cite{BonnetBenDhia:2013, Perfekt:14, Bonnetier:17, perfekt:2019, Li:2019, perfekt:2021, bonnetbendhia:2021, FARIA:2025, BonnetBenDhia:2025}. The present work highlights a complementary and, we believe, equally compelling direction: the study of higher-order or semiclassical perturbations, which can fundamentally reorganise the spectral structure and suppress these singular features.
	
	
	\bibliographystyle{siamplain}
	\bibliography{references}
	
	\appendix
	
		\section{Solution in a corner}\label{sec: Appendix corner}
	Let \(D\subset\mathbb{R}^2\) be a domain with a corner of opening angle
	\(\theta_0\in(0,2\pi)\), so that for some \(r_0>0\),
	\[
	W:=D\cap B_{r_0}(0)
	=
	\{(r,\theta):0<r<r_0,\ 0<\theta<\theta_0\}.
	\]
	
	The following result is a consequence of Theorem~\ref{thm: main result}. Although that theorem was proved in $\mathbb{R}^3$, the same arguments—up to minor technical modifications in the layer potential theory—extend to the two-dimensional setting.
	\begin{proposition}\label{prop:corner_constants}
		Let \((\rho,u,v)\) be a nonlocal plasmonic mode. Then there exist constants \(C_\rho,C_u\in\mathbb{C}\) such that
		\[
		\rho(r,\theta)\to C_\rho,\qquad
		u(r,\theta)\to C_u,\qquad
		v(r,\theta)\to C_u,
		\qquad\text{as }r\to0.
		\]
	\end{proposition}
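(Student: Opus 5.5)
Since Theorem~\ref{thm: main result} gives $\rho,u\in H^1(D)$ and $v\in H^1_{\mathrm{loc}}(\mathbb{R}^2\setminus\overline D)$, the plan is a local corner analysis in $W$ (and in the exterior sector $W^c:=\{(r,\theta):0<r<r_0,\ \theta_0<\theta<2\pi\}$), based on Kondrat'ev-type expansions in the angular variable. Continuity of $u=v$ across $\partial D$ means the limits of $u$ and $v$ at the tip must agree, which explains why both tend to the same constant $C_u$.

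\emph{Step 1: the density $\rho$.} In $W$, $\rho$ solves $\Delta\rho+\frac{z^2}{h^2}\rho=0$. On the two edges $\theta=0,\theta_0$ the transmission conditions give $\partial_\nu u=\partial_\nu v=-h\,\partial_\nu\rho$. I would first remove the Helmholtz term, treating $\frac{z^2}{h^2}\rho\in H^1$ as a smooth lower-order source; near $r=0$ it only produces terms of order $r^2$. Separating variables then gives angular modes $\cos(n\pi\theta/\theta_0)$ and $\sin(n\pi\theta/\theta_0)$ with radial factors $r^{\pm n\pi/\theta_0}$. The $H^1$ condition rules out every negative power and, for $n=0$, the $\log r$ term. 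What survives is a constant $C_\rho$ plus terms $O(r^{\lambda})$ with $\lambda=\min(\pi/\theta_0,2)>0$, up to a contribution coming from the Neumann data.

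\emph{Step 2: the coupled Neumann data.} This is the main obstacle, because the Neumann datum of $\rho$ is not given explicitly but coupled to $u$ and $v$. My approach is to write $u=\frac{h}{z^2}\rho+\tilde u$ with $\tilde u$ harmonic in $D$ (as in \eqref{eq: layer potential representation}). Then on $\partial D$
\[
\tilde u=v,\qquad \partial_\nu\tilde u=-h\Big(1+\tfrac{1}{z^2}\Big)\partial_\nu\rho=\partial_\nu v+\ldots .
\]
Eliminating $\rho$ turns this into a Laplace transmission problem across the corner with "contrast" $\varepsilon$-type coefficient, together with a Robin-like coupling through the Dirichlet-to-Neumann map of $\rho$. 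The coupling is of lower order: the Helmholtz DtN behaves like $|\xi|$ on the symbol level, whereas the Laplace pieces behave like the identity. The local Mellin symbol of the full system therefore becomes, at leading order, that of a decoupled Neumann problem for $\rho$ and continuous (transmission-free) harmonic matching for $\tilde u$ and $v$. I would then check that the Mellin symbol has no eigenvalues on the strip $-\delta<\Re\lambda<0$ and that $\lambda=0$ carries only the constant mode. Compared with the local problem, the key point is that the sign-changing contrast, which is responsible for the $e^{ic\log r}$ modes on the critical line, disappears at principal order. I expect this verification to be the delicate step.

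\emph{Step 3: conclusion.} Knowing the pole structure of the Mellin symbol, the standard corner asymptotics (Kondrat'ev) give, for each of $\rho$, $\tilde u$ and $v$, an expansion of the form $C+O(r^{\lambda})$ with $\lambda>0$. It follows that $\rho\to C_\rho$, that $u=\frac{h}{z^2}\rho+\tilde u\to C_u$, and, by trace continuity, that $v\to C_u$.
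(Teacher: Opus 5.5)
\paragraph{Gap in Step 2.} There is a genuine gap in Step 2, and Steps 1 and 3 both depend on it. First, the relation $\tilde u=v$ on $\partial D$ is wrong. From $u=v$ and $u=\frac{h}{z^2}\rho+\tilde u$ you get $\tilde u=v-\frac{h}{z^2}\rho$. Second, since $1+\frac{1}{z^2}=\frac{1}{\varepsilon}$, your Neumann relation reads $\varepsilon\,\partial_\nu\tilde u=\partial_\nu v$. This is exactly the local transmission condition in \eqref{eq:local plasmonic eigenvalue problem}.

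So suppose the coupling through $\rho$ were of lower order, as you assert. Then the principal Mellin problem for $(\tilde u,v)$ would be the local sign-changing transmission problem. For $\varepsilon$ in the critical interval of the corner, its symbol has exponents $\pm ic$ on the line $\Re\lambda=0$. That is precisely the $e^{ic\log r}$ behaviour you want to exclude, and it contradicts your claimed ``transmission-free'' leading order.

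In fact the coupling is not lower order. Only the Helmholtz term $\frac{z^2}{h^2}\rho$ is (it gains a factor $r^2$). The Dirichlet-to-Neumann map of $\rho$ has the same principal behaviour as the Laplace ones, and the Dirichlet jump $\frac{h}{z^2}\rho$ it produces is exactly what cancels the contrast. Since you leave the Mellin symbol check open, the proposal does not currently rule out critical exponents.

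\paragraph{Missing idea.} Avoid the subtraction altogether. In the original variables, $u=v$ and $\partial_\nu u=\partial_\nu v$ on both edges. So setting $U:=u$ in $W$ and $U:=v$ in the exterior sector glues to a function in $H^1(B_{r_0}(0))$ with $\Delta U=-\frac1h\rho\,\chi_D\in L^2$ and no interface condition at all. This gives the triangular structure your argument needs.
\begin{itemize}
\item First treat $U$ alone, which yields $u,v\to C_u$ with the same constant. The paper does this after $t=-\log r$, using the periodic Fourier series of $\hat U$ on $(T_0,\infty)\times(0,2\pi)$ and excluding growing modes via $\hat U_t\in L^2$. Alternatively, $U\in H^2_{\mathrm{loc}}\subset C^0$ in two dimensions.
\item Then treat $\rho$ as a Neumann problem in the wedge with the now-controlled data $-\frac1h\partial_\nu U$. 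The cosine modes, with exponents $m\pi/\theta_0$, give $\rho\to C_\rho$.
\end{itemize}
With this reordering your Kondrat'ev framework goes through, and it becomes essentially the paper's argument written in Mellin language.
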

	
	\begin{proof}
		Introduce the logarithmic radial variable
		\[
		t=-\log r,
		\qquad r=e^{-t},
		\]
		so that \(t\to+\infty\) corresponds to approaching the corner tip. In the
		variables \((t,\theta)\), the wedge neighbourhood \(W\) is transformed into the
		infinite strip
		\[
		\{(t,\theta):t>T_0,\ 0<\theta<\theta_0\},
		\qquad T_0:=-\log r_0.
		\]
		
		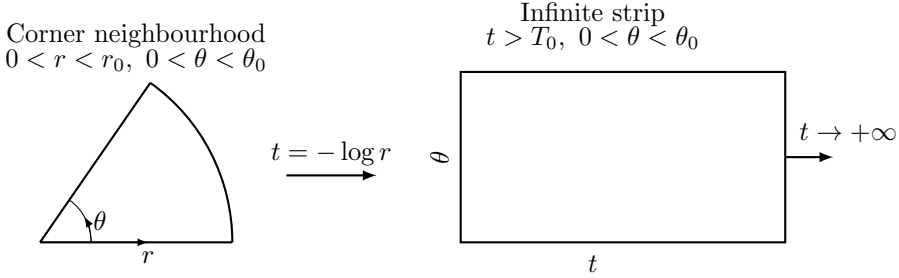
\begin{figure}[h]
			\centering
			\begin{tikzpicture}[scale=0.8,>=latex]
				
				\begin{scope}
					\draw[thick] (0,0) -- (3.2,0);
					\draw[thick] (0,0) -- ({3.2*cos(55)},{3.2*sin(55)});
					\draw[thick] (3.2,0) arc[start angle=0,end angle=55,radius=3.2];
					
					\draw[->] (0,0) -- (1.8,0) node[right,below] {$r$};
					
					\draw (0.85,0) arc[start angle=0,end angle=55,radius=0.85];
					\draw[->] (0.85,0) arc[start angle=0,end angle=30,radius=0.85];
					\node at (1,0.35) {$\theta$};
					
					\node[align=center] at (1.6,3.2)
					{Corner neighbourhood\\[-1mm]$0<r<r_0,\ 0<\theta<\theta_0$};
				\end{scope}
				
				\draw[->,thick] (4.1,1.1) -- (5.6,1.1) node[midway,above] {$t=-\log r$};
				
				\begin{scope}[xshift=7.0cm]
					\draw[thick] (0,0) rectangle (5.4,2.8);
					
					\node at (2.2,-0.35) {$t$};
					\node[rotate=90] at (-0.35,1.4) {$\theta$};
					
					\draw[->,thick] (5.4,1.4) -- (6.2,1.4);
					\node at (6.45,1.8) {$t\to+\infty$};
					
					\node[align=center] at (2.2,3.55)
					{Infinite strip\\[-1mm]$t>T_0,\ 0<\theta<\theta_0$};
				\end{scope}
				
			\end{tikzpicture}
			\caption{The change of variables \(t=-\log r\) maps the wedge neighbourhood \(0<r<r_0,\ 0<\theta<\theta_0\) to the infinite strip \(t>T_0,\ 0<\theta<\theta_0\), where \(T_0=-\log r_0\).}
			\label{fig:wedge_to_strip}
		\end{figure}
		
		For \(w \in\{\rho,u,v\}\), define
		\[
		\hat w(t,\theta):=w(e^{-t},\theta).
		\]
		In polar coordinates \((r,\theta)\), the Laplacian is
		\[
		\Delta=\partial_r^2+\frac1r\partial_r+\frac1{r^2}\partial_\theta^2,
		\]
		and a direct computation gives
		\[
		\Delta w(r,\theta)
		=
		\frac1{r^2}\big(\hat w_{tt}+\hat w_{\theta\theta}\big)
		=
		e^{2t}\big(\partial_t^2+\partial_\theta^2\big)\hat w.
		\]
		
		The interior equations of the nonlocal eigenvalue problem therefore become
		\[
		(\partial_t^2+\partial_\theta^2)\hat\rho
		=
		\lambda e^{-2t}\hat\rho,
		\qquad
		(\partial_t^2+\partial_\theta^2)\hat u
		=
		-\frac{e^{-2t}}{h}\hat\rho,
		\]
		in \(\{(t,\theta):t>T_0,\ 0<\theta<\theta_0\}\), where
		\[
		\lambda=\frac{\varepsilon}{(\varepsilon-1)h^2},
		\]
		while
		\[
		(\partial_t^2+\partial_\theta^2)\hat v=0
		\qquad
		\text{in }\{(t,\theta):t>T_0,\ \theta_0<\theta<2\pi\}.
		\]
		
		The wedge boundary corresponds to the rays \(\theta=0\) and \(\theta=\theta_0\).
		On these boundaries the outward normal is purely angular, so
		\(\partial_\nu=r^{-1}\partial_\theta\), up to orientation. Hence, after
		multiplication by \(r=e^{-t}\), the transmission conditions become
		\(t\)-independent boundary conditions on the strip edges:
		\[
		u=v,
		\qquad
		\partial_\theta u=\partial_\theta v=-h\,\partial_\theta \rho
		\quad\text{on }\theta=0,\theta_0,2\pi,\quad t>T_0.
		\]
		
		Moreover, a direct computation shows that
		\[
		\int_W |\nabla w|^2\,dx
		=
		\int_{T_0}^{\infty}\int_I
		\big(|\partial_t \hat w|^2+|\partial_\theta \hat w|^2\big)\,d\theta\,dt,
		\]
		and
		\[
		\int_W |w|^2\,dx
		=
		\int_{T_0}^{\infty}\int_I e^{-2t}|\hat w|^2\,d\theta\,dt,
		\]
		where \(I=(0,\theta_0)\) for \(\rho,u\), and similarly \(I=(\theta_0,2\pi)\) for
		\(v\). From Theorem~\ref{thm: main result} we have that 	
		\[
		\rho,u\in H^1(W),\qquad v\in H^1(B_{r_0}(0)\setminus \overline W),
		\] 
		which implies
		\[
		\partial_t \hat w,\,\partial_\theta \hat w\in L^2((T_0,\infty)\times I),
		\qquad
		e^{-t}\hat w\in L^2((T_0,\infty)\times I).
		\]
		In particular,
		\[
		e^{-2t}\hat\rho\in L^2((T_0,\infty)\times(0,\theta_0)),
		\]
		so the right-hand sides in the strip equations are square-integrable and
		exponentially decaying.
		
		Next define the global field on the full strip by
		\[
		\hat U(t,\theta)=
		\begin{cases}
			\hat u(t,\theta), & 0<\theta<\theta_0,\\[2mm]
			\hat v(t,\theta), & \theta_0<\theta<2\pi.
		\end{cases}
		\]
		The transmission conditions \(u=v\) and \(\partial_\theta u=\partial_\theta v\) on
		\(\theta=\theta_0\), together with periodic matching at \(0\sim2\pi\), imply
		that for a.e. \(t\), one has
		\[
		\hat U(t,\cdot)\in H^1_{\mathrm{per}}(0,2\pi).
		\]
		Moreover,
		\[
		(\partial_t^2+\partial_\theta^2)\hat U=\hat g(t,\theta),
		\]
		where
		\[
		\hat g(t,\theta)=
		\begin{cases}
			-\dfrac{e^{-2t}}{h}\hat\rho(t,\theta), & 0<\theta<\theta_0,\\[2mm]
			0, & \theta_0<\theta<2\pi,
		\end{cases}
		\]
		and \(\hat g\in L^2((T_0,\infty)\times(0,2\pi))\).
		
		Expanding in Fourier series in \(\theta\),
		\[
		\hat U(t,\theta)=\sum_{n\in\mathbb Z} a_n(t)e^{in\theta},
		\qquad
		\hat g(t,\theta)=\sum_{n\in\mathbb Z} g_n(t)e^{in\theta},
		\]
		each Fourier coefficient satisfies
		\[
		a_n''(t)-n^2a_n(t)=g_n(t).
		\]
		For \(n\neq0\), the homogeneous solutions are \(e^{\pm |n|t}\). Since
		\(\hat U_t\in L^2((T_0,\infty)\times(0,2\pi))\), the growing mode
		\(e^{|n|t}\) is excluded. Because \(g_n\in L^2\) and decays exponentially, the
		variation-of-constants formula yields
		\[
		a_n(t)=O(e^{-|n|t}),
		\qquad n\neq0.
		\]
		
		For \(n=0\), we have
		\[
		a_0''(t)=g_0(t),
		\]
		with \(g_0\in L^1(T_0,\infty)\cap L^2(T_0,\infty)\). Hence \(a_0'(t)\) has a
		finite limit as \(t\to\infty\). Since \(a_0'\in L^2(T_0,\infty)\), that limit
		must be zero, and therefore \(a_0(t)\to C_u\) for some constant \(C_u\). It
		follows that
		\[
		\hat U(t,\theta)=C_u+O(e^{-t})
		\qquad\text{as }t\to\infty,
		\]
		and therefore
		\[
		\hat u(t,\theta)\to C_u
		\quad (0<\theta<\theta_0),
		\qquad
		\hat v(t,\theta)\to C_u
		\quad (\theta_0<\theta<2\pi).
		\]
		
		It remains to treat \(\hat\rho\), which satisfies
		\[
		(\partial_t^2+\partial_\theta^2)\hat\rho
		=
		\lambda e^{-2t}\hat\rho
		\qquad\text{in }(T_0,\infty)\times(0,\theta_0).
		\]
		Using the cosine basis on \((0,\theta_0)\),
		\[
		\hat\rho(t,\theta)
		=
		b_0(t)+\sum_{m\ge1}
		b_m(t)\cos\!\Big(\frac{m\pi}{\theta_0}\theta\Big),
		\]
		projection onto each mode gives
		\[
		b_m''(t)-\mu_m b_m(t)=f_m(t),
		\qquad
		\mu_m=\Big(\frac{m\pi}{\theta_0}\Big)^2,
		\]
		where \(f_m(t)\) is exponentially decaying and belongs to \(L^2(T_0,\infty)\).
		As before, the growing homogeneous solutions are excluded by the
		\(H^1\)-boundedness of \(\rho\), and therefore
		\[
		b_m(t)=O(e^{-\sqrt{\mu_m}\,t}),
		\qquad m\ge1.
		\]
		
		For the zero mode,
		\[
		b_0''(t)=f_0(t),
		\]
		with \(f_0\in L^1(T_0,\infty)\cap L^2(T_0,\infty)\). Thus \(b_0'(t)\to0\), and
		hence \(b_0(t)\to C_\rho\) for some constant \(C_\rho\). Therefore
		\[
		\hat\rho(t,\theta)=C_\rho+O(e^{-\pi t/\theta_0})
		\qquad\text{as }t\to\infty.
		\]
		
		Returning to the original variable \(r=e^{-t}\), we conclude that
		\[
		u(r,\theta)\to C_u,
		\qquad
		v(r,\theta)\to C_u,
		\qquad
		\rho(r,\theta)\to C_\rho,
		\qquad\text{as }r\to0.
		\]
		This proves the result.
	\end{proof}
	
	\section{Asymptotic evaluation of layer potentials}	
	\begin{lemma}\label{lem: asymptotics S_D^k and K_D^k}
		Assume that $\partial D$ is smooth (at least $C^2$), let $z\in\mathbb{C}$ satisfy $\Im z>0$, let $h\in\mathbb{R}^+$, and let $\psi\in C^1(\partial D)$. Then, as $h\to0$,
		\[
		\SS^{\frac{z}{h}}[\psi](x)
		=
		-\frac{i h}{2z}\,\psi(x)+O(h^2),
		\qquad x\in\partial D,
		\]
		and
		\[
		\KK^{*,\frac{z}{h}}[\psi](x)=O(h),
		\qquad x\in\partial D.
		\]
	\end{lemma}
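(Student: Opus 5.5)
The plan is to localise the integrals near $x$ and use the exponential decay of the kernel. Set $k=z/h$. Since $\Im z>0$, we have $|e^{ik|x-y|}|=e^{-\Im z\,|x-y|/h}$, so the kernel $\Gamma^k(x-y)$ is exponentially small outside any fixed neighbourhood of $x$. Fix $x\in\partial D$ and $\delta>0$ small. Split $\partial D$ into $\partial D\cap B_\delta(x)$ and its complement. On the complement the contribution to both $\SS^{k}[\psi](x)$ and $\KK^{*,k}[\psi](x)$ is $O(h^{-1}e^{-c/h})$, hence $o(h^N)$ for every $N$. It therefore suffices to study the local patch.

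On the patch we parametrise $\partial D$ near $x$ as a graph over its tangent plane: $y=x+\eta+\phi(\eta)\nu(x)$, with $\eta\in T_x\partial D$, $\phi(0)=0$, $\nabla\phi(0)=0$, $|\phi(\eta)|\le C|\eta|^2$. Then $|x-y|=|\eta|(1+O(|\eta|^2))$ and $d\sigma(y)=(1+O(|\eta|^2))\,d\eta$. We also write $\psi(y)=\psi(x)+O(|\eta|)$, using $\psi\in C^1$. The leading term of the single layer is
\[
-\frac{\psi(x)}{4\pi}\int_{\mathbb{R}^2}\frac{e^{iz|\eta|/h}}{|\eta|}\,d\eta=-\frac{\psi(x)}{2}\int_0^\infty e^{izs/h}\,ds=-\frac{ih}{2z}\,\psi(x),
\]
after extending the integral to the whole plane at exponentially small cost. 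The remainders (the $O(|\eta|)$ term from $\psi$, and the curvature corrections in distance, phase and area element) are bounded by integrals of the form $\int |\eta|^{m}\,e^{-\Im z|\eta|/h}\,|\eta|^{-1}\,d\eta=O(h^{m+1})$ with $m\ge1$, which gives $O(h^2)$.

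One point needs care: the phase correction $e^{iz(|x-y|-|\eta|)/h}-1=O(|\eta|^3/h)$. I will bound it by $|\eta|^3/h$ times the decaying exponential; this yields $O(h^{3})$, so it is harmless.

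For $\KK^{*,k}$, the kernel is
\[
\partial_{\nu(x)}\Gamma^k(x-y)=\frac{(1-ik|x-y|)\,e^{ik|x-y|}}{4\pi|x-y|^3}\,\langle x-y,\nu(x)\rangle,
\]
and $\langle x-y,\nu(x)\rangle=-\phi(\eta)=O(|\eta|^2)$. The kernel is therefore bounded by $C\bigl(|\eta|^{-1}+h^{-1}\bigr)e^{-\Im z|\eta|/h}$. Integrating in polar coordinates gives $O(h)+O(h^{-1}h^2)=O(h)$, with $\psi$ only needing to be bounded; the principal value causes no difficulty here.

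The main obstacle is organising the remainder bounds uniformly in $x$, in particular handling the terms where the $1/h$ factor from $k$ multiplies a curvature correction. The principle is that each power of $|\eta|$ is worth a power of $h$ against the weight $e^{-\Im z|\eta|/h}$, and the $C^2$ assumption on $\partial D$ supplies exactly the quadratic bound on $\phi$ that this argument needs.
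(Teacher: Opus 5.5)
Your proposal is correct and follows essentially the same route as the paper's proof: exponential localisation to a boundary patch, graph parametrisation over the tangent plane, and explicit evaluation of the planar integral giving $-\frac{ih}{2z}\psi(x)$. For $\KK^{*,z/h}$ it likewise uses the kernel bound $C\bigl(|\eta|^{-1}+h^{-1}\bigr)e^{-\Im z\,|\eta|/h}$ (from $\langle x-y,\nu(x)\rangle=O(|\eta|^2)$) integrated in polar coordinates, and your explicit $O(h^3)$ bound on the phase correction $e^{iz(|x-y|-|\eta|)/h}-1$ fills in a term the paper's remainder estimate leaves implicit.
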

	
	\begin{proof}
		Set
		\[
		k=\frac{z}{h},
		\qquad \Im z>0,
		\]
		and fix \(x\in \partial D\). Since
		\[
		\Gamma^{\frac{z}{h}}(x-y)
		=
		-\frac{e^{iz|x-y|/h}}{4\pi |x-y|},
		\]
		we have
		\[
		\SS^{\frac{z}{h}}[\psi](x)
		=
		-\int_{\partial D}
		\frac{e^{iz|x-y|/h}}{4\pi |x-y|}
		\psi(y)\,d\sigma(y).
		\]
		
		We first derive the asymptotics for \(\SS^{\frac{z}{h}}\). Choose \(\delta>0\) small enough so that, in a neighbourhood of \(x\), the boundary \(\partial D\) may be represented as a smooth graph over the tangent plane at \(x\). Splitting the integral into local and nonlocal parts, we write
		\[
		\SS^{\frac{z}{h}}[\psi](x)
		=
		-\int_{\partial D\cap B_\delta(x)}
		\frac{e^{iz|x-y|/h}}{4\pi |x-y|}
		\psi(y)\,d\sigma(y)
		-
		\int_{\partial D\setminus B_\delta(x)}
		\frac{e^{iz|x-y|/h}}{4\pi |x-y|}
		\psi(y)\,d\sigma(y),
		\]
		where \(B_\delta(x)\) denotes the Euclidean ball of radius \(\delta\) centred at \(x\).
		
		On \(\partial D\setminus B_\delta(x)\), one has \(|x-y|\ge \delta\), hence
		\[
		\left|
		\int_{\partial D\setminus B_\delta(x)}
		\frac{e^{iz|x-y|/h}}{4\pi |x-y|}
		\psi(y)\,d\sigma(y)
		\right|
		\le
		C e^{-\Im z\,\delta/h}\|\psi\|_{L^\infty(\partial D)}
		=
		O(e^{-c/h})
		\]
		for some \(c>0\). Thus it remains to analyse the local contribution.
		
		After translation and rotation, we may assume that \(x=0\), that the tangent plane at \(x\) is \(\{x_3=0\}\), and that near \(x\) the boundary is parametrised by
		\[
		y(u)=(u,\eta(u)),
		\qquad u\in U\subset\mathbb{R}^2,
		\]
		where \(\eta(0)=0\), \(\nabla \eta(0)=0\), and \(\eta(u)=O(|u|^2)\). Then
		\[
		|y(u)|=|u|+O(|u|^3),
		\qquad
		d\sigma(u)=(1+O(|u|^2))\,du,
		\]
		and, since \(\psi\in C^1(\partial D)\),
		\[
		\psi(y(u))=\psi(x)+O(|u|).
		\]
		Therefore,
		\begin{align*}
			&\int_{\partial D\cap B_\delta(x)}
			\frac{e^{iz|x-y|/h}}{4\pi |x-y|}
			\psi(y)\,d\sigma(y) \\
			&=
			\int_{|u|<\delta'}
			\frac{e^{iz|y(u)|/h}}{4\pi |y(u)|}
			\psi(y(u))(1+O(|u|^2))\,du,
		\end{align*}
		for some \(\delta'>0\). Using the expansions above, we obtain
		\[
		\frac{e^{iz|y(u)|/h}}{|y(u)|}\psi(y(u))(1+O(|u|^2))
		=
		\frac{e^{iz|u|/h}}{|u|}\psi(x)+R_h(u),
		\]
		where \(R_h(u)\) satisfies
		\[
		|R_h(u)|
		\le
		C e^{-\Im z\,|u|/h}.
		\]
		Indeed, the errors coming from
		\[
		\psi(y(u))-\psi(x)=O(|u|),\qquad
		\frac{1}{|y(u)|}-\frac{1}{|u|}=O(|u|),\qquad
		d\sigma(u)-du=O(|u|^2)\,du,
		\]
		all yield terms bounded by \(C e^{-\Im z |u|/h}\), and hence contribute \(O(h^2)\) after integration over \(|u|<\delta'\). Thus
		\[
		\SS^{\frac{z}{h}}[\psi](x)
		=
		-\psi(x)\int_{\mathbb{R}^2}\frac{e^{iz|u|/h}}{4\pi |u|}\,du
		+
		O(h^2),
		\]
		where the replacement of the local patch by \(\mathbb{R}^2\) introduces only an exponentially small error.
		
		It remains to compute the leading integral. Passing to polar coordinates,
		\begin{align*}
			\int_{\mathbb{R}^2}\frac{e^{iz|u|/h}}{4\pi |u|}\,du
			&=
			\frac{1}{4\pi}\int_0^{2\pi}\int_0^\infty e^{izr/h}\,dr\,d\theta \\
			&=
			\frac12\int_0^\infty e^{izr/h}\,dr.
		\end{align*}
		Since \(\Im z>0\),
		\[
		\int_0^\infty e^{izr/h}\,dr
		=
		\frac{i h}{z},
		\]
		and therefore
		\[
		\int_{\mathbb{R}^2}\frac{e^{iz|u|/h}}{4\pi |u|}\,du
		=
		\frac{i h}{2z}.
		\]
		We conclude that
		\[
		\SS^{\frac{z}{h}}[\psi](x)
		=
		-\frac{i h}{2z}\,\psi(x)+O(h^2).
		\]
		
		We now turn to \(\KK^{*,\frac{z}{h}}\). By definition,
		\[
		\KK^{*,\frac{z}{h}}[\psi](x)
		=
		\int_{\partial D}
		\frac{\partial \Gamma^{\frac{z}{h}}(x-y)}{\partial \nu(x)}
		\psi(y)\,d\sigma(y).
		\]
		As above, we split the integral into local and nonlocal parts:
		\[
		\KK^{*,\frac{z}{h}}[\psi](x)
		=
		\int_{\partial D\cap B_\delta(x)}
		\frac{\partial \Gamma^{\frac{z}{h}}(x-y)}{\partial \nu(x)}
		\psi(y)\,d\sigma(y)
		+
		\int_{\partial D\setminus B_\delta(x)}
		\frac{\partial \Gamma^{\frac{z}{h}}(x-y)}{\partial \nu(x)}
		\psi(y)\,d\sigma(y).
		\]
		On \(\partial D\setminus B_\delta(x)\), the same exponential decay gives
		\[
		\int_{\partial D\setminus B_\delta(x)}
		\frac{\partial \Gamma^{\frac{z}{h}}(x-y)}{\partial \nu(x)}
		\psi(y)\,d\sigma(y)
		=
		O(e^{-c/h}).
		\]
		
		For the local part, we compute
		\[
		\frac{\partial \Gamma^{\frac{z}{h}}(x-y)}{\partial \nu(x)}
		=
		\frac{e^{iz|x-y|/h}}{4\pi}
		\left(
		\frac{1}{|x-y|^2}
		-
		\frac{iz}{h\,|x-y|}
		\right)
		\frac{(x-y)\cdot \nu(x)}{|x-y|}.
		\]
		Since \(\partial D\) is smooth,
		\[
		(x-y)\cdot \nu(x)=O(|x-y|^2)
		\qquad\text{as }y\to x.
		\]
		Hence, for \(y\in \partial D\cap B_\delta(x)\),
		\[
		\left|
		\frac{\partial \Gamma^{\frac{z}{h}}(x-y)}{\partial \nu(x)}
		\right|
		\le
		C e^{-\Im z\,|x-y|/h}
		\left(
		\frac{1}{|x-y|}+\frac{|z|}{h}
		\right).
		\]
		
		Using local surface polar coordinates near \(x\), the surface measure satisfies
		\[
		d\sigma(y)\sim r\,dr\,d\theta,
		\qquad r=|x-y|.
		\]
		Therefore,
		\begin{align*}
			\left|
			\int_{\partial D\cap B_\delta(x)}
			\frac{\partial \Gamma^{\frac{z}{h}}(x-y)}{\partial \nu(x)}
			\psi(y)\,d\sigma(y)
			\right|
			&\le
			C\|\psi\|_{L^\infty(\partial D)}
			\int_0^\delta e^{-\Im z\,r/h}
			\left(
			\frac{1}{r}+\frac{|z|}{h}
			\right)r\,dr \\
			&=
			C\|\psi\|_{L^\infty(\partial D)}
			\int_0^\delta e^{-\Im z\,r/h}
			\left(
			1+\frac{|z|}{h}r
			\right)\,dr.
		\end{align*}
		Now set \(r=hs\). Then
		\[
		\int_0^\delta e^{-\Im z\,r/h}
		\left(
		1+\frac{|z|}{h}r
		\right)\,dr
		=
		h\int_0^{\delta/h} e^{-\Im z\,s}(1+|z|s)\,ds.
		\]
		Since \(\Im z>0\), the latter integral is bounded uniformly in \(h\), and we infer that
		\[
		\int_0^\delta e^{-\Im z\,r/h}
		\left(
		1+\frac{|z|}{h}r
		\right)\,dr
		=
		O(h).
		\]
		Combining the local and nonlocal estimates, we conclude that
		\[
		\KK^{*,\frac{z}{h}}[\psi](x)=O(h).
		\]
		This proves the lemma.
	\end{proof}

	\section{Computations for a sphere}\label{sec: Appendix sphere}
	\paragraph{Diagonalisation of $\Lambda_h(z)$}
	Let $D\subset\mathbb{R}^3$ be the unit ball and let $\{Y_{\ell m}\}_{\ell\ge 0,\,-\ell\le m\le \ell}$ denote the spherical harmonics, which form an orthonormal basis of $L^2(\partial D)$. Since the layer potentials are rotationally invariant for the sphere, they diagonalise in this basis. Together with the classical spherical harmonic expansion of the Green function, this yields
	\begin{align*}
		\SS\!\left[Y_{\ell m}\right] &= -\frac{1}{2\ell+1}\,Y_{\ell m}, \\
		\KK^*\!\left[Y_{\ell m}\right] &= \frac{1}{2(2\ell+1)}\,Y_{\ell m}, \\
		\SS^{k}\!\left[Y_{\ell m}\right]
		&=  -i k\,h_\ell^{(1)}(k)\, j_\ell(k)\,Y_{\ell m},\\
		\left(-\tfrac12 I+\KK^{*,k}\right)\!\left[Y_{\ell m}\right]
		&=  -i k^2\,h_\ell^{(1)}(k)\,j_\ell'(k)\,Y_{\ell m},
	\end{align*}
	where $j_\ell$ and $h_\ell^{(1)}$ denote the spherical Bessel and spherical Hankel functions, respectively.
	
	Set $k:=z/h$. Using the above identities, the operator
	\[
	\Lambda_h(z)
	=
	\left( z^2 I+\tfrac12 I +\KK^*\right)
	\left( -\tfrac12 I +\KK^{*,k} \right)
	-
	\left( -\tfrac14 I +(\KK^*)^2 \right)
	\SS^{-1}\SS^{k}
	\]
	(and therefore also the operator $\tilde{\Lambda}_h(z)$) diagonalises in the spherical harmonics basis:
	\[
	\Lambda_h(z)\left[Y_{\ell m}\right]
	=
	\lambda_\ell^{(h)}(z)\,Y_{\ell m},
	\]
	where
	\[
	\lambda_\ell^{(h)}(z)
	:=
	i\,k\,h_\ell^{(1)}(k)
	\left[
	-k\!\left(z^2+\frac{\ell+1}{2\ell+1}\right) j_\ell'(k)
	+
	\frac{\ell(\ell+1)}{2\ell+1}\, j_\ell(k)
	\right].
	\]
	
	\medskip
	
	\paragraph{Modal expansion}
	Assume that $\lambda_\ell^{(h)}(z)\neq 0$ for all $\ell\ge1$. Then
	\[
	\Lambda_h(z)^{-1}
	=
	\sum_{\ell=0}^{\infty}
	\frac{1}{\lambda_\ell^{(h)}(z)}\,P_\ell,
	\]
	where $P_\ell$ denotes the orthogonal projector onto $\operatorname{span}\{Y_{\ell m}:\,|m|\le \ell\}$. For a given function $f\in H^{-\frac12}(\partial D)$ we have
	\[
	P_\ell[f]
	=
	\sum_{m=-\ell}^{\ell}
	\langle f,Y_{\ell m}\rangle_{L^2(\partial D)}\,Y_{\ell m}.
	\]
	Hence
	\begin{align*}
		\psi(z)
		&=
		\Lambda_h(z)^{-1}\!\left[\frac{\partial v^{\text{ext}}}{\partial \nu}\right]
		\nonumber\\
		&=
		-\sum_{\ell=0}^{\infty}
		\frac{z^2}{\lambda_\ell^{(h)}(z)}\,
		P_\ell\!\left[\frac{\partial v^{\text{ext}}}{\partial \nu}\right].
	\end{align*}
	
	Using this expansion, we can compute the scattered field. From \eqref{eq: q(psi)} we obtain
	\[
	\left( \frac12 I +\KK^* \right) q
	=
	-\left(-\tfrac12 I+\KK^{*,k}\right)[\psi]
	-
	\frac{\partial v^{\text{ext}}}{\partial \nu}.
	\]
	Projecting onto each spherical harmonic subspace gives
	\begin{align*}
		\left( \frac12 I +\KK^* \right) P_\ell[q]
		&=
		\frac{z^2}{\lambda_\ell^{(h)}(z)}
		\left(-\tfrac12 I+\KK^{*,k}\right)
		P_\ell\!\left[\frac{\partial v^{\text{ext}}}{\partial \nu}\right]
		-
		P_\ell\!\left[\frac{\partial v^{\text{ext}}}{\partial \nu}\right].
	\end{align*}
	Therefore
	\[
	P_\ell[q]
	=
	(2\ell+1)
	\left(
	\frac{k j_\ell'(k)-\ell j_\ell(k)}
	{-k\!\left(z^2(2\ell+1)+\ell+1\right) j_\ell'(k)+\ell(\ell+1) j_\ell(k)}
	\right)
	P_\ell\!\left[\frac{\partial v^{\text{ext}}}{\partial \nu}\right].
	\]
	
	Consequently, the scattered field admits the classical spherical harmonic expansion
	\begin{align*}\label{eq: scattered field sphere}
		v(x)-v^{\text{ext}}(x)
		&=
		\SS[q](x)
		\nonumber\\
		&=
		\sum_{\ell=0}^{\infty}
		\frac{k j_\ell'(k)-\ell j_\ell(k)}
		{-k\!\left(z^2(2\ell+1)+\ell+1\right) j_\ell'(k)+\ell(\ell+1) j_\ell(k)}
		\frac{1}{|x|^{\ell+1}}
		P_\ell\!\left[\frac{\partial v^{\text{ext}}}{\partial \nu}\right]
		\!\left(\frac{x}{|x|}\right).
	\end{align*}
	This representation provides an explicit modal expansion for the scattered field and can be used to compute optical observables for a spherical nanoparticle.
	
	\begin{remark}
		If $z_j$ satisfies $\lambda_\ell^{(h)}(z_j)=0$, then
		\[
		\operatorname{Res}_{z=z_j}\tilde{\Lambda}_h(z)^{-1}
		=
		\frac{1}{{\lambda'_\ell}^{(h)}(z_j)}\,P_\ell.
		\]
		Since $h_\ell^{(1)}(k)$ has no zeros and the spherical Bessel functions $j_\ell$ and $j_\ell'$ do not vanish simultaneously for $k\neq0$, the zeros of $\lambda_\ell^{(h)}(z)$ are simple. Consequently, $\tilde{\Lambda}_h(z)^{-1}$ possesses only simple poles.
	\end{remark}
	
	\paragraph{Nonlocal plasmonic modes and eigenvalues}
	The equation
	\[
	\lambda_\ell^{(h)}(z)=0
	\]
	defines the dispersion relation for the $\ell$-th nonlocal plasmonic mode of the sphere. The mode $\ell=0$ corresponds to a constant boundary density and does not generate plasmonic resonances. Each integer $\ell\ge1$ therefore corresponds to a family of resonant modes whose frequencies are determined by the roots of this equation.
	
	These modes can be obtained directly by separation of variables. Writing the solutions in spherical coordinates $(r,\theta)$, the charge density $\rho$, the interior potential $u$, and the exterior potential $v$ takes the form
	\[
	\rho(r,\theta)=R_\ell(r)\,Y_{\ell m}(\theta),\qquad
	u(r,\theta)=U_\ell(r)\,Y_{\ell m}(\theta),\qquad
	v(r,\theta)=V_\ell(r)\,Y_{\ell m}(\theta),
	\]
	where $Y_{\ell m}$ denote the spherical harmonics.
	
	Substituting this ansatz into the governing system shows that the radial components satisfy ordinary differential equations. In the interior of the particle, the charge density satisfies the Helmholtz equation
	\[
	\Delta \rho + k^2 \rho =0,
	\qquad k=\frac{z}{h},
	\]
	and regularity at the origin gives
	\[
	R_\ell(r)=j_\ell(kr),
	\]
	where $j_\ell$ denotes the spherical Bessel function. The interior potential satisfies
	\[
	h\Delta u=-\rho,
	\]
	and therefore takes the form
	\[
	U_\ell(r)=A_\ell r^\ell + B_\ell j_\ell(kr).
	\]
	In the exterior region the potential is harmonic and decays at infinity, giving
	\[
	V_\ell(r)=C_\ell r^{-(\ell+1)}.
	\]
	
	The boundary conditions couple these radial solutions and can be used to determine the constants $A_\ell$, $B_\ell$, and $C_\ell$.
\end{document}